\newcommand{\ZZ}{\mathbb{Z}}
\newcommand{\CC}{\mathbb{C}}
\newcommand{\QQ}{\mathbb{Q}}
\newcommand{\F}{\mathbb{F}}
\newcommand{\FF}{\F}
\newcommand{\FFbar}{{\overline{\FF}}}
\newcommand{\PP}{\mathbb{P}}
\newcommand{\cE}{\mathcal{E}}
\newcommand{\EC}{\cE}
\newcommand{\cC}{\mathcal{C}}
\newcommand{\C}{\cC}
\newcommand{\JC}{J_{\cC}}
\newcommand{\AV}{\mathcal{A}}
\newcommand{\Mumford}[2]{\ensuremath{\langle{#1},{#2}\rangle}}
\newcommand{\End}{\mathrm{End}}
\newcommand{\Hom}{\mathrm{Hom}}
\newcommand{\Aut}{\mathrm{Aut}}
\newcommand{\GL}{\mathrm{GL}}
\newcommand{\SL}{\mathrm{SL}}
\newcommand{\Sp}{\mathrm{Sp}}
\newcommand{\PGL}{\mathrm{PGL}}
\newcommand{\dualof}[1]{\ensuremath{{#1}^\dagger}}
\newcommand{\subgrp}[1]{\ensuremath{\langle{#1}\rangle}}
\newcommand{\softO}{\ensuremath{\widetilde{O}}}
\newcommand{\frakell}{\mathfrak{l}}
\begin{document}

\title{%
    Isogenies for point counting 
    \texorpdfstring{\\}{ }
    on genus two hyperelliptic curves
    \texorpdfstring{\\}{ }
    with maximal real multiplication}
\titlerunning{Genus-2 point counting with maximal RM}

\authorrunning{%
    Ballentine,
    Guillevic,
    Lorenzo Garc\'ia,
    Martindale,
    Massierer,
    Smith,
    and 
    Top
}
\author{%
    Sean Ballentine,
    Aurore Guillevic,
    Elisa Lorenzo Garc\'ia,
    Chloe Martindale,
    \texorpdfstring{\\}{ }
    Maike Massierer,
    Benjamin Smith,
    and 
    Jaap Top
}
\institute{
    Sean Ballentine
    \at
    Department of Mathematics,
    University of Maryland,
    4176 Campus Dr.,
    College Park, MD 20742-4015,
    USA
    \email{seanbal@math.umd.edu}
    \and
    Aurore Guillevic
    \at
    Inria Nancy Grand Est, Equipe Caramba, 615 rue du jardin botanique, CS 20101, 54603 Villers-l\`{e}s-Nancy Cedex, France
    \email{aurore.guillevic@inria.fr}
    \and
    Elisa Lorenzo Garc\'ia
    \at
    IRMAR, Universit\'e de Rennes 1, Campus de Beaulieu, 35042  Rennes Cedex, France
    \email{elisa.lorenzogarcia@univ-rennes1.fr}
    \and
    Chloe Martindale
    \at
    Mathematisch Instituut, Universiteit Leiden, P.O. Box 9512,
    2300 RA Leiden, The Netherlands
    \email{chloemartindale@gmail.com}
    \and
    Maike Massierer
    \at
    School of Mathematics and Statistics, University of New South Wales, Sydney NSW 2052, Australia
    \email{maike@unsw.edu.au}
    \and
    Benjamin Smith
    \at
    INRIA and Laboratoire d'Informatique de
    l'\'Ecole polytechnique (LIX), 91120 Palaiseau, France
    \email{smith@lix.polytechnique.fr}
    \and
    Jaap Top
    \at
    University of Groningen,
    Johann Bernoulli Institute for Mathematics and Computer Science,
    P.O. Box 407, 9700AK Groningen, The Netherlands
    \email{j.top@rug.nl}
}

\maketitle

\abstract{%
    Schoof's classic algorithm
    allows point-counting for elliptic curves over finite fields
    in polynomial time.
    This algorithm was subsequently improved by Atkin,
    using factorizations of modular polynomials,
    and by Elkies,
    using a theory of explicit isogenies.
    Moving to Jacobians of genus-2 curves,
    the current state of the art for point counting
    is a generalization of Schoof's algorithm.
    While we are currently missing the tools we need 
    to generalize Elkies' methods
    to genus 2,
    recently Martindale and Milio 
    have computed analogues of modular polynomials
    for genus-2 curves whose Jacobians have real multiplication
    by maximal orders of small discriminant.
    In this article,
    we prove Atkin-style results for genus-2 Jacobians with real
    multiplication by maximal orders,
    with a view to using these new modular polynomials
    to improve the practicality of point-counting
    algorithms for these curves.
}

\abstract*{%
    Schoof's classic algorithm
    allows point-counting for elliptic curves over finite fields
    in polynomial time.
    This algorithm was subsequently improved by Atkin,
    using factorizations of modular polynomials,
    and by Elkies,
    using a theory of explicit isogenies.
    Moving to Jacobians of genus-2 curves,
    the current state of the art for point counting
    is a generalization of Schoof's algorithm.
    While we are currently missing the tools we need 
    to generalize Elkies' methods
    to genus 2,
    recently Martindale and Milio 
    have computed analogues of modular polynomials
    for genus-2 curves whose Jacobians have real multiplication
    by maximal orders of small discriminant.
    In this article,
    we prove Atkin-style results for genus-2 Jacobians with real
    multiplication by maximal orders,
    with a view to using these new modular polynomials
    to improve the practicality of point-counting
    algorithms for these curves.
}

\section{
     Introduction
}

Efficiently computing the number of points
on the Jacobian of a genus 2 curve 
over a finite field
is an important problem in experimental number theory
and number-theoretic cryptography.
When the characteristic of the finite field is small,
Kedlaya's algorithm and its descendants provide an efficient
solution (see \cite{Kedlaya01}, \cite{Harvey07}, and \cite{Harrison12}),
while in extremely small characteristic we have extremely fast
AGM-style algorithms (see for example~\cite{Mestre01}, \cite{Mestre02}, and \cite{Carls04}).
However, the running times of these algorithms are exponential in the
size of the field characteristic; the hardest case,
therefore (and also the most important case for contemporary
cryptographic applications) is where the characteristic is large,
or even where the field is a prime field.

So let \(q\) be a power of a large prime \(p\), 
and let \(\C\) be a genus-2 curve over \(\FF_q\).
Our fundamental problem is to compute the number of \(\FF_q\)-rational
points on the Jacobian \(\JC\) of \(\C\). 

\subsection{The state of the art}

In theory, the problem is solved:
we can compute \(\#\JC(\FF_q)\) in polynomial time
(that is, polynomial in \(\log q\))
using Pila's algorithm~\cite{Pila90},
which is the immediate generalization of 
Schoof's elliptic-curve point-counting algorithm~\cite{Schoof85}
to higher-dimensional abelian varieties.
But the exponent in Pila's polynomial time
is extremely large;
so, despite its theoretical importance,
this algorithm is completely impractical (see~\S\ref{sec:Pila}).
Indeed, to our knowledge it has never been implemented.

Gaudry and Schost have developed and successfully implemented
a much more practical variant of Pila's algorithm
for the case $q=p$ that runs in time \(\softO(\log^8p)\);
not just polynomial time, but on the edge of practicality~\cite{GauSch12}. 
Still, their algorithm requires an extremely intensive
calculation for cryptographic-sized Jacobians:
Gaudry and Schost estimated a running time of 
around one core-month (in 2008)
to compute \(\#\JC(\FF_p)\)
when \(p\) has around 128 bits~\cite{GauSch08}.

The situation improves dramatically if \(\JC\)
is equipped with an efficiently computable \emph{real multiplication} endomorphism.
For such Jacobians, Gaudry, Kohel, and Smith~\cite{Gaudry--Kohel--Smith} 
give an algorithm to compute \(\#\JC(\FF_q)\)
in time \(\softO(\log^5q)\).
This allowed the computation of \(\#\JC(\FF_p)\)
for one curve \(\C\) 
drawn from the genus-2 family in~\cite{TauTopVer91}
with \(p = 2^{512}+1273\)
in about 80 core-days (in 2011); 
this remains, to date, the record for genus-2 point counting 
over prime fields.
For 128-bit fields, the cost is reduced to 3 core hours (in 2011).

All of these algorithms are generalizations of Schoof's algorithm,
which computes the Frobenius trace (and hence the order
\(\#E(\FF_q)\)) of an elliptic curve \(E/\FF_q\) modulo~\(\ell\) 
for a series of small primes~\(\ell\)
by considering the action of Frobenius on the \(\ell\)-torsion.
But Schoof's algorithm is not the state of the art for
elliptic-curve point counting: it has evolved into the
much faster Schoof--Elkies--Atkin (SEA) algorithm, surveyed in~\cite{Schoof95}.
Atkin's improvements involve factoring the \(\ell\)-th modular polynomial 
(evaluated at the \(j\)-invariant of the target curve) 
to deduce information on the Galois structure of the \(\ell\)-torsion, 
which then restricts the possible values of the trace modulo \(\ell\)
(see~\S\ref{sec:Atkin}).
Elkies' improvements involve computing the kernel of a rational
\(\ell\)-isogeny, which takes the place of the full \(\ell\)-torsion;
deducing the existence of the isogeny, and computing its kernel,
requires finding a root of the \(\ell\)-th modular polynomial
evaluated at the \(j\)-invariant of the target curve
(see~\S\ref{sec:Elkies}).

\subsection{Our contributions, and beyond}

Our ultimate goal is to 
generalize Atkin's and Elkies' improvements to genus 2.
In this article, we concentrate on generalizing Atkin's methods
to genus-2 Jacobians with known real multiplication.
This project is prompted by the recent appearance of
two new algorithms for computing modular ideals,
the genus-2 analogue of modular polynomials:
Milio~\cite{Milio15} has computed modular ideals 
for general genus-2 Jacobians,
while Milio~\cite[\S5]{Milio-thesis} and Martindale~\cite{Mar16} 
have independently computed modular ideals for genus-2 Jacobians 
with RM by orders of small discriminants.

To extend Elkies' methods to genus 2 
we would need an analogue of Elkies' algorithm~\cite[\S\S7-8]{Schoof95},
which computes defining equations for the kernel of an isogeny of
elliptic curves (and the isogeny itself) 
corresponding to a root of the evaluated modular polynomial.
We do not know of any such algorithm in genus 2.
Couveignes and Ezome have recently developed an algorithm
to compute explicit \((\ell,\ell)\)-isogenies of genus-2 Jacobians~\cite{Couveignes--Ezome},
presuming that the kernel has already been constructed somehow---%
but kernel construction is precisely the missing step that we need.\footnote{
    We would also like mention Bisson, Cosset, and Robert's 
    \texttt{AVIsogenies} software package~\cite{AVIsogenies},
    which provides some functionality in this direction.
    However, their methods apply to abelian surfaces with a lot of rational 2- and 4-torsion,
    and applying them to general genus-2 Jacobians (with or without known RM)
    generally requires a substantial extension of the base
    field to make that torsion rational. 
    This is counterproductive in the context of point counting.
}

In contrast, Atkin's improvements for elliptic-curve Schoof 
require nothing beyond the modular polynomial itself;
so we can hope to achieve something immediately in genus 2 
by generalizing Atkin's results on factorizations of modular polynomials 
to the decomposition of genus-2 modular ideals.
This is precisely what we do in this article.

We focus on the RM case for three reasons.
First, the construction of explicit modular ideals is furthest advanced
in this case: Milio has constructed modular ideals for primes in
\(\QQ(\sqrt{5})\) of norm
up to 31, while for general Jacobians the current
limit is 3.
It is therefore already possible to compute nontrivial and interesting
examples in the RM case.
Second, point counting is currently much more efficient for
Jacobians with efficiently computable RM;
we hope that, at some point, our methods
can help tip RM point counting from ``feasible'' into ``routine''.
Third, from a purely theoretical point of view, 
the RM case is more similar to the elliptic curve case
in the sense that real multiplication allows us,
in favorable circumstances, to split $\ell$-torsion subgroups of the
Jacobian into groups of the same size as encountered for
elliptic curves.

After recalling the SEA algorithm for elliptic curves in~\S\ref{sec:SEA},
we describe the current state of genus~2 point counting,
and set out our program for a generalized SEA algorithm
in~\S\ref{sec:genus-2-point-counting}.
We describe the modular invariants we need for this
in~\S\ref{sec:invariants},
and the modular ideals that relate them in~\S\ref{sec:modular-ideals}.
We can then state and prove our main theoretical results,
which are generalizations of Atkin's
theorems for these modular ideals, in~\S\ref{sec:Atkin-g2}.
In~\S\ref{sec:sqrt5} we provide some concrete details on 
the special case of RM by \(\QQ(\sqrt{5})\),
before concluding with some experimental results in~\S\ref{sec:experiments}.

\subsection{Vanilla abelian varieties}
We can substantially simplify the task ahead
by restricting our attention to a class of elliptic curves and Jacobians
(more generally, abelian varieties)
with sufficiently general CM endomorphism rings.
The following definition makes this precise.

\begin{definition}
    \label{def:vanilla}
    We say that a \(g\)-dimensional abelian variety \(\AV/\FF_q\)
    is \emph{vanilla}\footnote{%
        Vanilla is the most common and least complicated flavour of abelian
        varieties over finite fields.
        Heuristically, over large finite fields,
        randomly sampled abelian varieties are vanilla
        with overwhelming probability.
        Indeed, being vanilla is invariant in isogeny classes,
        and Howe and Zhu have shown in~\cite[Theorem 2]{Howe--Zhu}
        that the fraction of isogeny classes of \(g\)-dimensional
        abelian varieties over \(\FF_q\) that are ordinary and
        absolutely simple tends to 1 as \(q \to \infty\).
        All absolutely simple ordinary abelian varieties are vanilla,
		except those whose endomorphism algebras contain roots of unity;
        but the number of such isogeny classes for fixed \(g\)
        is asymptotically negligible.
    }
    if its endomorphism algebra \(\End_{\FFbar_q}(\AV)\otimes\QQ\)
    (over the algebraic closure)
    is a CM field of degree \(2g\)
    that does \emph{not} contain any roots of unity other than
    \(\pm1\).
\end{definition}

If an elliptic curve \(\EC/\FF_q\) is vanilla,
then \(\EC\) is nonsupersingular and \(j(\EC)\) is neither \(0\) nor
\(1728\):
these are the conditions Schoof applies systematically
in~\cite{Schoof95}.
We note that in particular,
vanilla abelian varieties are absolutely simple.

To fix notation,
we recall that if \(\AV\) is an abelian variety,
then a \emph{principal polarization}
is an isomorphism \(\xi \colon \AV \to \AV^\vee\) 
associated with an ample divisor class on~\(\AV\),
where \(\AV^\vee = \mathrm{Pic}^0(\AV)\) is the dual abelian variety
(see eg.~\cite[\S13]{Milne}).
We will be working with elliptic curves and Jacobians of genus-2 curves;
these all have a canonical principal polarization.
Each endomorphism \(\phi\) of \(\AV\) 
has a corresponding dual endomorphism \(\phi^\vee\) of \(\AV^\vee\).
If \((\AV,\xi)\) is a principally polarized abelian variety,
then \(\xi\) induces a \emph{Rosati involution} on \(\End(\AV)\),
defined by
\[
    \phi \longmapsto \dualof{\phi} := \xi^{-1}\circ\phi^\vee\circ\xi
    \quad
    \text{for }
    \phi \in \End(\AV)
    \ .
\]
In the world of elliptic curves, the Rosati involution is the familiar
dual.
For vanilla abelian varieties,
the Rosati involution acts as complex conjugation on the endomorphism
ring.

Fix a real quadratic field \(F=\QQ(\sqrt{\Delta})\),
with fundamental discriminant $\Delta>0$ and ring of integers $\mathcal{O}_F$.
We write \(\alpha \mapsto \bar\alpha\) 
for the involution of \(F\) over \(\QQ\);
we emphasize that in this article, 
\(\bar\cdot\) does \emph{not} denote complex conjugation.

From a theoretical point of view,
when talking about real multiplication,
our fundamental data are triples
$(\AV,\xi,\iota)$ where
$\AV$ is an abelian surface,
$\xi\colon\AV\to\AV^{\vee}$ is a principal polarization, 
and $\iota\colon\mathcal{O}_F\hookrightarrow \End(\AV)$ 
is an embedding stable under the Rosati involution
(that is, \(\dualof{\iota(\mu)} = \iota(\mu)\) 
for all \(\mu\) in~\(\mathcal{O}_F\); 
we can then think of the Rosati involution as 
complex conjugation on the endomorphism ring).
While this notation \((\AV,\xi,\iota)\)
may seem quite heavy at first glance,
we remind the reader that generally
there are only two choices of embedding~\(\iota\)
(corresponding to the two square roots of \(\Delta\)),
and we are only really interested in the case where
\(\AV\) is a Jacobian,
in which case the polarization \(\xi\) is canonically determined.

\section{
    Genus one curves: elliptic curve point counting
}
\label{sec:SEA}

We begin by briefly recalling the SEA algorithm for elliptic curve
point counting in large characteristic.
First we describe Schoof's original algorithm~\cite{Schoof95},
before outlining the improvements of Elkies and Atkin.
This will provide a point of reference for comparisons
with genus-2 algorithms.
 
Let \(\EC\) be an elliptic curve over a finite field \(\FF_q\)
of large characteristic (or at least, with \(\mathrm{char}(\FF_q) \gg
\log q\)).
We may suppose that \(\EC\)
is defined by a (short) Weierstrass equation
$\EC: y^2 = x^3 + ax + b$,
with \(a\) and \(b\) in \(\FF_q\).

Like all modern point-counting algorithms,
the Schoof and SEA algorithms compute the characteristic polynomial 
$$ 
    \chi_{\pi}(X) = X^2 - tX + q
$$
of the Frobenius endomorphism \(\pi\) of \(\EC\).
We call $t$ the \emph{trace} of Frobenius.
Since the \(\FF_q\)-rational points on \(\EC\) 
are precisely the fixed points of \(\pi\),
we have 
\[
    \#\EC(\FF_q) = \chi_{\pi}(1) = q + 1 - t
    \ ;
\]
so determining \(\#\EC(\FF_q)\) is equivalent to determining \(t\).
Hasse's theorem tells us that
\begin{equation}
    \label{eq:Hasse}
    |t| \le 2\sqrt{q}
    \ .
\end{equation}

\subsection{Schoof's algorithm}
Schoof's basic strategy is
to choose a set \(\mathcal{L}\) 
of primes \(\ell \not= p\)
such that \(\prod_{\ell\in\mathcal{L}}\ell > 4\sqrt{q}\).
We then compute \(t_\ell := t \bmod{\ell}\)
for each of the primes \(\ell\) in \(\mathcal{L}\),
and then recover the value of $t$
from \(\{(t_\ell,\ell): \ell \in \mathcal{L}\}\)
using the Chinese Remainder Theorem.
The condition \(\prod_{\ell\in\mathcal{L}}\ell > 4\sqrt{q}\)
ensures that \(t\) is completely determined by the collection of \(t_\ell\)
(by Hasse's theorem, Equation~\eqref{eq:Hasse}).

For Schoof's original algorithm,
the natural choice is to let \(\mathcal{L}\)
be the set of the first \(O(\log q)\) primes,
stopping when the condition 
\(\prod_{\ell\in\mathcal{L}}\ell > 4\sqrt{q}\)
is satisfied.
When applying Elkies' and Atkin's modifications,
we will need to be more subtle with our choice of \(\mathcal{L}\).
It is also possible to replace primes with small prime
powers; we will not explore this option here.
 
Now, let \(\ell\) be one of our primes in \(\mathcal{L}\);
our aim is to compute \(t_\ell\).
We know that \(\pi^2(P) - [t]\pi(P) + [q]P = 0\)
for all \(P\) in \(\EC\),
and hence
\[
    \pi^2(P) - [t_\ell]\pi(P) + [q\bmod{\ell}]P = 0
    \quad
    \text{for all } P \in \EC[\ell]
    \ .
\]
We can therefore compute \(t_\ell\)
as follows:
\begin{enumerate}
    \item
        Construct a point \(P\) of order \(\ell\).
    \item
        Compute \(Q = \pi(P)\)
        and \(R = \pi^2(P) + [q\bmod{\ell}]P\).
    \item
        Search for \(0 \le t_\ell < \ell\)
        such that \([t_\ell]Q = R\),
        using Shanks' baby-step giant-step algorithm 
        in the cyclic subgroup of the \(\ell\)-torsion
        generated by \(Q\). 
\end{enumerate}

To construct such a \(P\),
we begin by computing the \(\ell\)-th division polynomial
\(\Psi_\ell\) in \(\FF_q[X]\),
which is the polynomial whose roots
in \(\FFbar_q\) are precisely the \(x\)-coordinates
of the nontrivial points in \(\EC[\ell]\).
When \(\ell\) is odd and prime to \(q\),
we have $\deg \Psi_\ell = (\ell^2-1)/2$.
We then define the ring 
\(A = \FF_q[X,Y]/(\Psi_\ell(X),Y^2 - X^3 - aX - b)\),
and take \(P = (X,Y)\) in \(\EC(A)\).

In order to work efficiently with 
\(Q = \pi(P) = (X^q,Y^q)\)
in the search for \(t_\ell\),
we need to compute a compact form for \(Q\).
This means computing reduced representatives for \(X^q\) and \(Y^q\)
in the ring \(A\)---%
that is,
reducing \(X^q\) modulo \(\Psi_\ell(X)\) and \(Y^q\) modulo
\((\Psi_\ell(X),Y^2-X^3-aX-b)\)---%
which costs $O(\log q)$ \(\FF_q\)-operations.

Having computed \(t_\ell\) for each \(\ell\) in \(\mathcal{L}\),
we recover \(t\) (and hence \(\chi_\pi\))
using the Chinese Remainder Theorem;
this then yields \(\#\EC(\FF_q) = q + 1 - t\).
In cryptographic contexts, 
we are generally interested in curves of (almost) prime order.
One particularly convenient feature of Schoof's algorithm 
is that it allows us to detect small prime factors of
\(\#\EC(\FF_q)\) early:
we can determine if any \(\ell\) in \(\mathcal{L}\) divides \(\#\EC(\FF_q)\)
by 
checking whether \(t_\ell \equiv q + 1 \pmod{\ell}\).
If we find such a factor, 
then we can immediately abort the calculation of~\(t\)
and move on to another candidate curve.

The cost to compute $\chi_\ell$ is $\softO(\ell^2 + (\log q)\ell^2 +
\sqrt{\ell} \ell^2)$ $\F_q$-operations. 
We can take~\(\mathcal{L}\) to be a set of \(O(\log q)\)
primes, the largest of which is in \(O(\log q)\);
the total cost is therefore \(\softO(\log^4q)\)
\(\FF_q\)-operations.

\subsection{Frobenius eigenvalues and subgroups}

Fix a basis of \(\EC[\ell]\),
and thus an isomorphism \(\EC[\ell] \cong {\FF_\ell}^2\).
Now \(\pi\) acts on \(\EC[\ell]\) as an element of \(\GL_2(\FF_\ell)\).
The local characteristic polynomial \(\chi_\ell\)
is just the characteristic polynomial of this matrix.

Likewise, \(\pi\) permutes the \(\ell\)-subgroups of \(\EC[\ell]\);
that is, the one-dimensional subspaces of \(\EC[\ell] \cong {\FF_\ell}^2\).
These are the points of \(\PP(\EC[\ell]) \cong \PP^1(\FF_\ell)\),
and we can consider the image of \(\pi\) in
\(\PGL_2(\FF_\ell) \cong \Aut(\PP(\EC[\ell]))\).
The order of \(\pi\)
as an element of \(\PGL_2(\FF_\ell)\)
is clearly independent of the choice of basis.
 
\begin{proposition}
    \label{prop:EC-trace-relation}
    Let \(\EC/\FF_q\) be an elliptic curve
    with Frobenius endomorphism~\(\pi\),
    and let \(\ell\not=p=\mathrm{char}(\FF_q)\) be an odd prime.
    If \(e\) is the order of the image of \(\pi\) 
    in \(\PGL_2(\FF_\ell)\),
    then the trace \(t\) of \(\pi\) satisfies
    \[
        t^2 = \eta_{e}q 
        \quad 
        \text{in } 
        \FF_\ell 
        \ ,
    \]
    where 
    \(
        \eta_{e} 
        = 
        \begin{cases}
            \zeta + \zeta^{-1}+2
            \text{ with } \zeta\in \FF_{\ell^2}^{\times} 
            \text{ of order } e
            &
            \text{if } \gcd(\ell,e)=1
            \ ,
            \\
            4 & \text{otherwise} \ .
        \end{cases}
    \)
\end{proposition}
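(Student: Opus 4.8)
The plan is to work with the eigenvalues of \(\pi\) acting on \(\EC[\ell]\) and to read off both the order \(e\) and the trace relation from them. Write \(\chi_\ell(X) = X^2 - tX + q\) for the characteristic polynomial of \(\pi\) on \(\EC[\ell] \cong \FF_\ell^2\), reduced modulo \(\ell\), and let \(\lambda, \mu \in \FFbar_\ell\) be its roots, so that \(\lambda + \mu = t\) and \(\lambda\mu = q\) in \(\FF_\ell\). Since \(\ell \neq p\), the determinant \(q\) is nonzero in \(\FF_\ell\), so \(\lambda, \mu \in \FFbar_\ell^\times\) and the ratio \(\zeta := \lambda/\mu\) is well defined. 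The key observation is that the image \(\bar\pi\) of \(\pi\) in \(\PGL_2(\FF_\ell)\) has order \(e\) equal to the smallest positive integer with \(\pi^e\) scalar, and that this order can be computed directly from \(\zeta\).

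First I would settle the order \(e\), splitting into the semisimple and non-semisimple cases. If \(\pi\) is semisimple (i.e.\ diagonalizable over \(\FFbar_\ell\), which happens exactly when \(\chi_\ell\) has distinct roots or \(\pi\) is scalar), then in an eigenbasis \(\pi^k = \mathrm{diag}(\lambda^k,\mu^k)\) is scalar if and only if \(\lambda^k = \mu^k\), i.e.\ \(\zeta^k = 1\) (one checks the common value \(\lambda^k\) then automatically lies in \(\FF_\ell\)); hence \(e = \mathrm{ord}(\zeta)\). Here \(\zeta\) lies in \(\FF_\ell^\times\) when \(\chi_\ell\) splits and in \(\FF_{\ell^2}^\times\) when \(\chi_\ell\) is irreducible, so in either case \(e \mid \ell^2 - 1\) and \(\gcd(\ell, e) = 1\); this is the first branch. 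If instead \(\pi\) is not semisimple, then it has a repeated eigenvalue \(\lambda = \mu \in \FF_\ell\) and consists of a single Jordan block, so \(\bar\pi\) is a nontrivial unipotent element of \(\PGL_2(\FF_\ell)\) and therefore has order exactly \(\ell\); thus \(e = \ell\) and \(\gcd(\ell, e) = \ell \neq 1\), which is the ``otherwise'' branch.

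Next I would extract the trace relation, which in fact holds uniformly: squaring \(t = \lambda + \mu\) and factoring out \(\lambda\mu = q\) gives
\[
    t^2 = (\lambda + \mu)^2 = \lambda\mu\left(\frac{\lambda}{\mu} + 2 + \frac{\mu}{\lambda}\right) = q\,(\zeta + \zeta^{-1} + 2)
    \ .
\]
In the semisimple case \(\zeta\) has order \(e\) by the previous step, so \(t^2 = \eta_e q\) with \(\eta_e = \zeta + \zeta^{-1} + 2\); in the non-semisimple case \(\lambda = \mu\) forces \(\zeta = 1\), whence \(\zeta + \zeta^{-1} + 2 = 4 = \eta_e\), in agreement with the definition. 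To see that the identity really lives in \(\FF_\ell\) one checks that \(\zeta + \zeta^{-1} \in \FF_\ell\): this is clear when \(\chi_\ell\) splits, and when \(\chi_\ell\) is irreducible we have \(\mu = \lambda^\ell\), so \(\zeta = \lambda^{1-\ell}\) satisfies \(\zeta^\ell = \lambda^{\ell - \ell^2} = \lambda^{\ell-1} = \zeta^{-1}\), and hence \(\zeta + \zeta^{-1} = \zeta + \zeta^\ell = \mathrm{Tr}_{\FF_{\ell^2}/\FF_\ell}(\zeta) \in \FF_\ell\).

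I expect the main obstacle to be the careful treatment of the non-semisimple case and its interaction with the scalar case. The point is that the order in \(\PGL_2(\FF_\ell)\) jumps to the characteristic \(\ell\) precisely when \(\pi\) fails to be diagonalizable---this is what forces the separate value \(\eta_e = 4\)---while the genuinely scalar case (equal eigenvalues, but diagonalizable) instead gives \(e = 1\), \(\zeta = 1\), and \(\eta_1 = 4\), so it is subsumed by the first branch without contradiction. Verifying that these two superficially similar situations are correctly sorted by the condition \(\gcd(\ell, e) = 1\) versus \(\gcd(\ell, e) \neq 1\), and confirming that the order of a nontrivial unipotent element of \(\PGL_2(\FF_\ell)\) is exactly \(\ell\), are the only delicate points; everything else is the routine eigenvalue bookkeeping above.
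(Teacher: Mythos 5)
Your proof is correct and follows essentially the same route as the paper's: both diagonalize (or Jordan-reduce) the action of \(\pi\) on \(\EC[\ell]\), set \(\zeta\) equal to the ratio of the eigenvalues, identify \(e\) with the order of \(\zeta\) (or with \(\ell\) in the non-semisimple case), and derive \(t^2 = (\lambda+\mu)^2 = q(\zeta+\zeta^{-1}+2)\). Your version is slightly more explicit about the unipotent case and the \(\FF_\ell\)-rationality of \(\zeta+\zeta^{-1}\), but the substance is identical.
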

\begin{proof}
    We follow the proof of~\cite[Proposition 6.2]{Schoof95}
    (correcting the minor error that leads in the case \(e\) even to an \(e/2\)-th rather than
    \(e\)-th root of unity appearing in the last part of the statement).
    Let \(\lambda_1,\lambda_2\in\FF_{\ell^2}\) be 
    the eigenvalues of the image of \(\pi\) 
    in \(\Aut(\EC[\ell]) \cong \GL_2(\FF_\ell)\);
    then 
    \[
        \lambda_1 + \lambda_2 = t
        \quad
        \text{and}
        \quad
        \lambda_1\lambda_2 = q
        \quad
        \text{in}
        \quad 
        \FF_\ell
        \ .
    \]
In case \(\lambda_1=\lambda_2\) we have \(e\mid\ell\)
and the assertion follows. In case \(\lambda_1\neq\lambda_2\) the given \(e\) is the minimal integer \(>0\) with \(\lambda_1^e = \lambda_2^e\).
In particular \(\gcd(e,\ell)=1\) and \(\lambda_2=\lambda_1\zeta\) for
    some primitive $e$=th root of unity \(\zeta\) (in \(\FF_{\ell^2}\); in fact \(e\mid \ell-1\) in case the eigenvalues are in \(\FF_\ell\) and
\(e\mid \ell+1\) otherwise).
    Hence \(q = \lambda_1\lambda_2 = \lambda_1^{2}\zeta\)
    which implies
    \[
        t^2 = (\lambda_1 + \lambda_2)^2 
            = \lambda_1^2(1+\zeta)^2
            = q\zeta^{-1}(\zeta^2 + 2\zeta + 1)
            = (\zeta + \zeta^{-1}+2)q.
    \]
\end{proof}

\subsection{Modular polynomials and isogenies}

The order-\(\ell\) subgroups of \(\EC[\ell]\)
are precisely the kernels of \(\ell\)-isogenies 
from \(\EC\) to other elliptic curves,
and the set of all such \(\ell\)-isogenies (up to isomorphism)
corresponds to the set of roots of \(\Phi_\ell(j(\EC),x)\)
in \(\FFbar_q\).
The classical modular polynomial $\Phi_\ell(X,Y)$,
of degree $\ell+1$ (in \(X\) and \(Y\)) over \(\ZZ\),
is defined by the property that $\Phi_\ell(j(\cE_1), j(\cE_2)) = 0$ 
precisely when there exists an $\ell$-isogeny $\cE_1 \rightarrow \cE_2$.
For \(\ell\) in \(O(\log q)\),
one can compute $\Phi_\ell(j(\EC),x)$ 
in $\softO(\ell^3)$ $\F_q$-operations
using Sutherland's algorithm~\cite{Sutherland13}.
Alternatively, 
we can use precomputed databases of modular polynomials over \(\ZZ\), 
reducing them modulo \(p\) and specializing them at~\(j(\EC)\).

The Galois orbits of the roots of \(\Phi_\ell(j(\EC),x)\)
correspond to orbits of \(\ell\)-isogeny kernels under \(\pi\),
and to orbits of points of \(\PP^1(\FF_\ell)\) under the image of
\(\pi\) in \(\PGL_2(\FF_\ell)\).
If \(j(\cE_1)\) and \(j(\cE_2)\) are both in \(\FF_{q^k}\),
then the isogeny is defined over \(\FF_{q^k}\) (up to a possible twist);
in particular, its kernel is defined over \(\FF_{q^k}\).
More precisely, we have the following key lemma:

\begin{lemma}[Proposition~6.1 of~\cite{Schoof95}]
    \label{lemma:Schoof-6-1-1}
    Let \(\EC/\FF_q\) be a vanilla elliptic curve
    with Frobenius endomorphism \(\pi\).
    \begin{enumerate}
        \item
            The polynomial \(\Phi_\ell(j(\EC),x)\)
            has a root in \(\FF_{q^e}\)
            if and only if 
            the kernel of the corresponding \(\ell\)-isogeny 
            is a one-dimensional eigenspace of \(\pi^e\) in \(\EC[\ell]\).
        \item
            The polynomial \(\Phi_\ell(j(\EC),x)\)
            splits completely over \(\FF_{q^d}\)
            if and only if \(\pi^d\) acts as a scalar matrix on \(\EC[\ell]\);
            that is,
            if and only if~\(d\) is a multiple of
            the order~\(e\) of the image of \(\pi\) in~\(\PGL_2(\FF_\ell)\).
            In particular, the minimal such~\(d\) is~\(e\).
    \end{enumerate}
\end{lemma}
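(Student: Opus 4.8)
The plan is to set up a Frobenius-equivariant dictionary between the roots of \(\Phi_\ell(j(\EC),x)\) and the order-\(\ell\) subgroups of \(\EC[\ell]\), and then read off both parts from the behaviour of a single subgroup under \(\pi\). Every order-\(\ell\) subgroup \(C\subset\EC[\ell]\) is the kernel of an \(\ell\)-isogeny \(\phi_C\colon\EC\to\EC/C\), and \(C\mapsto j(\EC/C)\) surjects onto the roots of \(\Phi_\ell(j(\EC),x)\). The crucial compatibility is that the relative \(q\)-power Frobenius endomorphism \(\pi\) acts on \(\EC(\FFbar_q)\) as the arithmetic Frobenius, so that \((\EC/C)^{(q^e)}=\EC/\pi^e(C)\) and hence \(\mathrm{Frob}_q^e(j(\EC/C))=j(\EC/\pi^e(C))\). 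Thus \(j(\EC/C)\in\FF_{q^e}\) if and only if \(j(\EC/\pi^e(C))=j(\EC/C)\).

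With this in hand, part~(2) follows formally from part~(1): applying part~(1) to every \(C\), the polynomial splits completely over \(\FF_{q^d}\) exactly when \(\pi^d(C)=C\) for all lines \(C\) in \(\EC[\ell]\cong\FF_\ell^2\), i.e.\ when \(\pi^d\) fixes every point of \(\PP^1(\FF_\ell)\). Since there are \(\ell+1\ge 3\) such points, this happens precisely when \(\pi^d\) is scalar, equivalently when the image of \(\pi\) in \(\PGL_2(\FF_\ell)\) has order dividing \(d\), with minimal such \(d\) equal to \(e\).

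So the real content is part~(1). The implication ``\(\pi^e(C)=C\Rightarrow j(\EC/C)\in\FF_{q^e}\)'' is immediate from the displayed compatibility. For the converse I would argue by contradiction: suppose \(j(\EC/\pi^e(C))=j(\EC/C)\) but \(\pi^e(C)\ne C\). Choosing an isomorphism \(\lambda\colon\EC/C\xrightarrow{\sim}\EC/\pi^e(C)\) and setting \(\alpha=\hat\phi_{\pi^e(C)}\circ\lambda\circ\phi_C\) produces an endomorphism of \(\EC\) of degree \(\ell^2\). A short argument using that \(\Hom(\EC,\EC/\pi^e(C))\) is torsion-free shows \(\alpha\neq\pm[\ell]\) (otherwise \(\lambda\circ\phi_C=\pm\phi_{\pi^e(C)}\), forcing \(C=\pi^e(C)\)); hence \(\alpha\) generates the imaginary quadratic field \(\End_{\FFbar_q}(\EC)\otimes\QQ\) and satisfies \(N(\alpha)=\deg\alpha=\ell^2\).

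Here is where the vanilla hypothesis does the work, and this is the step I expect to be the main obstacle, because for CM curves the map \(C\mapsto j(\EC/C)\) genuinely fails to be injective and one must show these failures are ``transverse'' to Frobenius orbits. If \(\ell\mid\alpha\) in the order \(\End_{\FFbar_q}(\EC)\), then \(\alpha/\ell\) is an algebraic integer of norm \(1\), hence a root of unity, hence \(\pm1\) by the vanilla condition, forcing \(\alpha=\pm[\ell]\) --- a contradiction. Thus \(\ell\nmid\alpha\), so the ideal \((\alpha)\) of norm \(\ell^2\) is not divisible by \((\ell)\); examining the three splitting types of \(\ell\) shows this is possible only when \(\ell\) splits, say \((\ell)=\frakell\bar{\frakell}\), with \((\alpha)=\frakell^2\) up to conjugation. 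Then \(C\subseteq\ker\alpha=\EC[\frakell^2]\), which is cyclic of order \(\ell^2\), so \(C\) is its unique order-\(\ell\) subgroup \(\EC[\frakell]\). But \(\EC[\frakell]\) is stable under the commutative ring \(\End_{\FFbar_q}(\EC)\), in particular under \(\pi\), so \(\pi^e(C)=C\), contradicting our assumption. This rules out the bad case and completes part~(1).
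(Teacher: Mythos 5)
Your proof is correct, but it takes a genuinely different route from the one the paper follows. The paper simply cites Schoof's Proposition~6.1 for this lemma, and the method it actually reproduces (in the proof of its genus-2 analogue, Proposition~3) is the Hom-module argument: a root \(j'\in\FF_{q^e}\) gives a model \(\EC'/\FF_{q^e}\); the Frobenius endomorphisms of \(\EC/\FF_{q^e}\) and \(\EC'\) agree up to a root of unity in the CM field, which the vanilla hypothesis forces to be \(\pm1\); after a quadratic twist the curves are \(\FF_{q^e}\)-isogenous, and since \(\Hom_{\FF_{q^e}}(\EC,\EC')\) is either \(0\) or all of \(\Hom_{\FFbar_q}(\EC,\EC')\), the specific \(\ell\)-isogeny is defined over \(\FF_{q^e}\), so its kernel is Galois-stable. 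You instead prove that \(C\mapsto j(\EC/C)\) is injective on each Frobenius orbit by building the degree-\(\ell^2\) endomorphism \(\alpha=\hat\phi_{\pi^e(C)}\circ\lambda\circ\phi_C\) and analysing its kernel inside the quadratic order \(\End_{\FFbar_q}(\EC)\). Your approach is more self-contained and makes the role of the vanilla hypothesis very concrete (\(\Aut(\EC)=\{\pm1\}\) ruling out \(\alpha=\ell\zeta\)); the paper's approach is less tied to the one-dimensionality of the kernel and to the arithmetic of quadratic orders, which is exactly why it transports to the genus-2 RM setting of Proposition~3, whereas yours would not generalize as directly.

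One small caveat in your final step: the passage ``\((\alpha)\) has norm \(\ell^2\) and is not divisible by \((\ell)\), hence \(\ell\) splits and \((\alpha)=\frakell^2\)'' tacitly assumes clean ideal factorization in \(\End_{\FFbar_q}(\EC)\), which can fail when \(\ell\) divides the conductor of that (possibly non-maximal) order. You can bypass this entirely: if \(\ker\alpha\) were not cyclic it would equal \(\EC[\ell]\), forcing \(\alpha=u\circ[\ell]\) with \(u\in\Aut(\EC)=\{\pm1\}\), which you have excluded; so \(\ker\alpha\) is cyclic of order \(\ell^2\), it is stable under \(\pi\) because \(\pi\) commutes with \(\alpha\), and its unique order-\(\ell\) subgroup is \(C\), giving \(\pi^e(C)=C\) directly. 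With that repair the argument is complete.
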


\subsection{Elkies, Atkin, and volcanic primes}
\label{sec:prime-types}

The primes $\ell \not= p$ 
are divided into 3 classes, or types,
with respect to a given~\(\EC/\FF_q\):
\emph{Elkies}, \emph{Atkin}, and \emph{volcanic}.
The type of \(\ell\) simultaneously reflects
the factorization of \(\Phi_\ell(j(\EC),x)\)
and
the Galois structure of the \(\ell\)-subgroups of \(\EC[\ell]\).
Here we recall a number of facts about these classes,
all of which are proven in~\cite[\S6]{Schoof95};
see also \cite[\S12.4]{Was}.

A prime \(\ell\) is \textbf{Elkies} 
if the ideal \((\ell)\) is split in \(\ZZ[\pi]\);
or, equivalently,
if \(t^2-4q\) is a nonzero square modulo \(\ell\).
Each of the two prime ideals over \((\ell)\)
defines the kernel of an \(\ell\)-isogeny,
\(\phi_i \colon \EC \to \EC_i\) for \(i = 1,2\), say.
This means that \(j(\EC_1)\) and \(j(\EC_2)\)
must be roots in \(\FF_q\) of \(\Phi_\ell(j(\EC),x)\).
Lemma~\ref{lemma:Schoof-6-1-1} then implies that
\begin{equation}
    \label{eq:EC-Elkies-factorization}
    \Phi_\ell(j(\cE),x) 
    = 
    (x-j(\EC_1))(x-j(\EC_2)) \prod_{i=1}^{(\ell-1)/e}f_i(x)
\end{equation}
where each of the \(f_i\) are irreducible of degree \(e\),
and \(e > 1\) is 
the order of the image of \(\pi\) in \(\PGL_2(\FF_\ell)\),
which must divide \(\ell-1\) in this case.

A prime \(\ell\) is \textbf{Atkin} 
if the ideal \((\ell)\) is inert in \(\ZZ[\pi]\);
or, equivalently,
if \(t^2-4q\) is \emph{not} a square modulo~\(\ell\).
There are \emph{no} \(\FF_q\)-rational \(\ell\)-isogenies
from \(\EC\),
and no \(\FF_q\)-rational \(\ell\)-subgroups of \(\EC[\ell]\).
Looking at the modular polynomial,
Lemma~\ref{lemma:Schoof-6-1-1} implies
\begin{equation}
    \label{eq:EC-Atkin-factorization}
    \Phi_\ell(j(\cE),x) = \prod_{i=1}^{(\ell+1)/e}f_i(x)
    \ ,
\end{equation}
where each of the \(f_i\) is an irreducible polynomial of degree \(e\),
and \(e > 1\) is 
the order of the image of \(\pi\) in \(\PGL_2(\FF_\ell)\),
which must divide \(\ell+1\) in this case.

Finally, a prime \(\ell\) is \textbf{volcanic} if
the ideal \((\ell)\) is ramified in \(\ZZ[\pi]\);
or, equivalently,
if \(\ell\) divides \(t^2 - 4q\).
Applying Lemma~\ref{lemma:Schoof-6-1-1},
either
\begin{equation}
    \label{eq:EC-upper-volcanic-factorization}
    \Phi_\ell(j(\cE),x) 
    = 
    \prod_{i=1}^{\ell+1}(x - j_i)  
\end{equation}
with all of the \(j_i\) in \(\FF_q\) (so there are \(\ell+1\) rational \(\ell\)-isogenies, 
and \(\ell+1\) rational \(\ell\)-subgroups of \(\EC[\ell]\));
or
\begin{equation}
    \label{eq:EC-floor-volcanic-factorization}
    \Phi_\ell(j(\cE),x) 
    = 
    (Y-j_1)\cdot f(x)
    \ ,
\end{equation}
with \(f\) irreducible of degree \(\ell\)
(so there is a single rational \(\ell\)-isogeny,
and one rational \(\ell\)-subgroup of \(\EC[\ell]\)).
In either situation,
\(\pi|_{\EC[\ell]}\) acts on \(\EC[\ell]\)
with eigenvalues \(\lambda_1=\lambda_2\),
so its image in \(\PGL_2(\FF_\ell)\) therefore
has order  \(e\mid \ell\).

We note an interesting and useful fact in passing:
if 
\(\EC/\FF_q\) is vanilla,
\(\ell \not=p\) is an odd prime,
and \(r\) is the number of irreducible factors of \(\Phi_\ell(j(\EC),x)\),
then 
\begin{equation}
    \label{eq:EC-number-of-factors-of-Phi_ell}
    (-1)^r = \left(\frac{q}{\ell}\right)
\end{equation}
(cf.~\cite[Prop.~6.3]{Schoof95}; 
the proof generalizes easily from \(q = p\) to general prime powers).

\subsection{Computing the type of a prime}

The type of a given prime \(\ell\) for \(\EC\)
(that is, being volcanic, Atkin, or Elkies)
is defined in terms of the structure of \(\ZZ[\pi]\) and the trace \(t\).
When we are point-counting, these are unknown quantities;
but we can still determine the type of \(\ell\)
\emph{without} knowing \(t\) or \(\ZZ[\pi]\),
by factoring \(\Phi_\ell(j(\EC),x)\)
and comparing with the possible factorization types above.
This, in turn, gives us useful information about \(t\)
and \(\ZZ[\pi]\).
Determining the type of \(\ell\) in this way
costs $\softO(\ell^2 + (\log q) \ell)$ $\F_q$-operations.

In fact, computing the type of \(\ell\) for \(\EC\)
is a good way of checking the correctness of a claimed modular
polynomial.
Suppose somebody has computed a polynomial \(F(J_1,J_2)\),
and claims it is equal to \(\Phi_\ell\).
The factorization patterns for modular polynomials 
corresponding to the prime types above
are so special that there is very little hope of getting
these patterns for \(F(j(\EC),x)\) for varying \(\EC\) and \(p\)
unless \(F\) and~\(\Phi_\ell\) define the same variety in the
\((J_1,J_2)\)-plane.
We will use the genus-2 analogue of this observation 
in~\S\ref{sec:experiments}
to check the correctness of some of Martindale's modular polynomials.

\subsection{Atkin's improvement}
\label{sec:Atkin}

Atkin's contribution to the SEA algorithm
was to exploit the factorization type of the modular polynomial
to restrict the possible values of \(t\pmod{\ell}\).
While this does not improve the asymptotic complexity of Schoof's algorithm,
it did allow significant practical progress
before the advent of Elkies' improvements.

For example:
if \(\ell\) is volcanic,
then by definition
\begin{equation}
    t^2 = 4q \quad \text{in } \FF_\ell
    \ ,
\end{equation}
which determines \(t_\ell\) up to sign: \(t \equiv \pm2\sqrt{q} \pmod{\ell}\).
Note that this is also a consequence of
Proposition~\ref{prop:EC-trace-relation},
which we will now apply to the other two prime types.

If \(\ell\) is Elkies or Atkin for \(\EC\),
then Proposition~\ref{prop:EC-trace-relation}
tells us that
\begin{equation}
    t^2 = (\zeta + \zeta^{-1}+2)q 
    \quad
    \text{in }
    \FF_\ell
\end{equation}
for some primitive \(e\)-th root of unity \(\zeta\) in \(\FF_{\ell^2}\),
where \(e\mid\ell-1\) if \(\ell\) is Elkies
and \(e\mid \ell+1\) if \(\ell\) is Atkin.
The number of possible values of \(t_\ell^2\)
is therefore half the number of primitive \(e\)-th roots in these cases.
Note that modular polynomials can only give us information about
\(t_\ell^2\)---that is, \(t_\ell\) up to sign---since their solutions tell us about
isogenies only up to quadratic twists,
and twisting changes the sign of the trace.

Obviously, the smaller the degree \(e\) of the non-linear factors of
\(\Phi_\ell(j(\EC),x)\),
the fewer the values that \(t_\ell\) can possibly take.
For example, 
if \(e = 2\) then \(t_\ell = 0\);
if \(e = 3\),
then \(t_\ell = \pm\sqrt{q}\) in \(\FF_\ell\);
and if \(e = 4\),
then \(t_\ell = \pm\sqrt{2q}\) in \(\FF_\ell\).

The challenging part of Atkin's technique
is making use of these extra modular congruences.
Atkin's \emph{match-and-sort} algorithm 
(see eg.~\cite[\S11.2]{Lercier97})
is a sort of sophisticated baby-step giant-step in \(\EC(\FF_q)\)
exploiting this modular information.
Alternatively, we can use 
Joux and Lercier's \emph{Chinese-and-match} algorithm~\cite{JoLe01}.

\subsection{Elkies' improvement}
\label{sec:Elkies}

Elkies' contribution to the SEA algorithm
was to note that when computing \(t_\ell\),
we can replace \(\EC[\ell]\)
with the kernel of a rational \(\ell\)-isogeny, if it exists.
Looking at the classification of primes,
we see that there exists a rational \(\ell\)-isogeny
precisely when \(\ell\) is volcanic or Elkies (whence the terminology).
Of course, as we saw above, 
if \(\ell\) is one of the rare volcanic primes
then \(t_\ell\) is already determined up to sign;
it remains to see what can be done for Elkies primes.

Let \(\ell\) be an Elkies prime for \(\EC\),
and let \(\phi_1\) and \(\phi_2\) be \(\ell\)-isogenies
corresponding to the two roots of \(\Phi_\ell(j(\EC),x)\) in \(\FF_q\).
First, we note that 
\(\pi(P_i) = [\lambda_i]P_i\) for \(P_i\) in \(\ker\phi_i\),
and \(\lambda_1 + \lambda_2 \equiv t \pmod{\ell}\).
We only need to compute one of the \(\lambda_i\),
since then the other
is determined by the relation \(\lambda_1\lambda_2 = q\).

So let \(\phi\) be one of the two \(\ell\)-isogenies;
we want to compute its eigenvalue \(\lambda\).
The nonzero elements \((x,y)\) of \(\ker\phi\)
satisfy \(f_{\phi}(x) = 0\),
where \(f_{\phi}\) is a polynomial of degree \((\ell-1)/2\) (if \(\ell\) is
odd; if \(\ell = 2\), then \(\deg f_{\phi} = 1\)).
To compute \(\lambda\),
we define the ring 
\(A = \FF_q[X,Y]/(f_{\phi}(X),Y^2 - X^3 - aX - b)\),
set \(P = (X,Y)\) in \(\EC(A)\),
then compute \(Q = \pi(P)\)
and solve for \(\lambda\) in \(Q = [\lambda]P\);
then \(t_\ell \equiv \lambda + q/\lambda \pmod{\ell}\).

This approach is substantially faster than Schoof's algorithm for Elkies
\(\ell\),
because the degree of \(f_{\phi}\) is only \((\ell-1)/2\),
whereas the degree of \(\Psi_\ell\) is \((\ell^2-1)/2\);
so each operation in \(\EC(A)\) costs much less than it would if we used
\(\Psi_\ell\) instead of \(f_{\phi}\).
(In practice, it is also nice to be able to reduce the number of costly
Frobenius computations, since we only need to compute \(\pi(P)\) and not
\(\pi(\pi(P))\).)

The crucial step is computing \(f_{\phi}\)
given only \(\EC\) 
and the corresponding root \(j_i\) of \(\Phi_\ell(j(\EC),X)\).
We can do this using Elkies' algorithm,
which is explained in~\cite[\S\S7--8]{Schoof95}.
The total cost of computing \(t_\ell\) is then \(\softO(\log^3q)\)
\(\FF_q\)-operations:
that is, a whole factor of \(\log q\) faster
compared to Schoof's algorithm.

Ideally, then, we should choose \(\mathcal{L}\)
to only contain Elkies and volcanic primes:
that is, non-Atkin primes.
The usual naive heuristic on prime classes is to suppose that 
as \(q \to \infty\), the number
of Atkin and non-Atkin primes less than \(B\) for \(\EC/\FF_q\)
is approximately equal when \(B \sim \log q\);
under this heuristic, 
taking \(\mathcal{L}\) to contain only non-Atkin primes,
the SEA algorithm computes \(t\) in \(\softO(\log^4q)\)
\(\FF_q\)-operations.

While the heuristic holds on the average, assuming the GRH,
Galbraith and Satoh have shown that it can fail for some
curves~\cite[Appendix~A]{Satoh2002}:
there exist curves \(\EC/\FF_q\) such that if we try to compute
\(t_\ell\)
using \(\ell\) in the smallest possible set \(\mathcal{L}\)
containing only non-Atkin primes,
then \(\mathcal{L}\) must contain primes in \(\Omega(\log^2q)\).

\begin{remark}
    It is important to note that Elkies' technique applies only to primes
    \(\ell\) where there exists a rational \(\ell\)-isogeny:
    that is, only Elkies and volcanic primes.
    Atkin's technique for restricting the possible values of \(t_\ell\)
    applies to \emph{all} primes---not only Atkin primes.
\end{remark}

\section{
    The genus 2 setting
}
\label{sec:genus-2-point-counting}

Let \(\C\) be a genus-2 curve
defined over $\F_q$ (again, for $q$ odd).
We suppose that \(\C\) is defined by 
an equation of the form $y^2 = f(x)$, 
where $f$ is squarefree of degree 5.\footnote{%
    For full generality,
    we should also allow \(\deg f = 6\);
    the curve \(\C\) then has two points at infinity.
    This substantially complicates the formul\ae{}
    without significantly modifying the algorithms or their asymptotic complexity,
    so we will not treat this case here.
}
The curve \(\C\) then has a unique point at infinity, which we denote \(\infty\).

\subsection{The Jacobian}

We write \(\JC\) for the Jacobian of \(\C\).
Our main algorithmic handle on \(\JC\) is 
Mumford's model for hyperelliptic Jacobians,
which represents the projective \(\JC\) 
as a disjoint union of three affine subsets.
In this model,
points of \(\JC\) correspond to pairs of polynomials \(\Mumford{a(x)}{b(x)}\)
where \(a\) is monic,
\(\deg b < \deg a \le 2\),
and \(b^2 \equiv f \pmod{a}\)
(we call \(\Mumford{a}{b}\) the \emph{Mumford representation} of the
Jacobian point).
Mumford's coordinates on the affine subsets of \(\JC\)
are the coefficients of the polynomials \(a\) and \(b\)
(and in particular, 
a point \(\Mumford{a}{b}\) of \(\JC\) is defined over \(\FF_q\) 
if and only if \(a\) and \(b\) have coefficients in \(\FF_q\)).
The three affine subsets are
\begin{align*}
    W_2 & := \left\{ \Mumford{a}{b} \in \JC \mid \deg(a) = 2 \right\}
    & \text{(``general'' elements)}
    \ ,
    \\
    W_1 & := \left\{ \Mumford{a}{b} \in \JC \mid \deg(a) = 1 \right\}
    & \text{(``special'' elements)}
    \ ,
    \\
    W_0 & := \left\{ 0_{\JC} = \Mumford{1}{0} \right\}
    & \text{(the trivial element)}
    \ ,
\end{align*}
and \(\JC = W_2\sqcup W_1\sqcup W_0\).
The group law on \(\JC\) can be explicitly computed on Mumford representatives
using Cantor's algorithm~\cite{Cantor87}.

The point of \(\JC\) corresponding to a general divisor class 
\([(x_P,y_P) + (x_Q,y_Q) - 2\infty]\) on \(\C\)
is represented by \(\Mumford{a}{b}\)
where \(a(x) = (x - x_P)(x - x_Q)\)
and \(b\) is the linear polynomial 
such that \(b(x_P) = y_P\) and \(b(x_Q) = y_Q\).
Special classes \([(x_P,y_P) - \infty]\)
are represented by \(\Mumford{a}{b} = \Mumford{x - x_P}{y_P}\),
while \(0_{\JC} = [0]\) is represented by \(\Mumford{a}{b} = \Mumford{1}{0}\).

\subsection{Frobenius and endomorphisms of \texorpdfstring{\(\JC\)}{JC}}
The characteristic polynomial \(\chi_{\pi}\)
of the Frobenius endomorphism \(\pi\) has the form
$$ 
    \chi_{\pi}(X) = X^4 - tX^3 + (2q+s)X^2 -tqX + q^2
    \ ,
$$
where \(s\) and \(t\) are integers satisfying
the inequalities (cf.~\cite{Ruck90})
\begin{align*}
    |s| & < 4q 
    \ ,
    & 
    |t| & \leq 4 \sqrt{q} 
    \ ,
    &
    t^2 & > 4s 
    \ ,
    &
    s + 4q & > 2|t| \sqrt{q}
    \ .
\end{align*}
We have
\[ 
    \#\JC(\F_q)=\chi_{\pi}(1)=1-t+2q+s-tq+q^2
    \ ,
\]
as well as \( \#\cC(\F_q)=1-t+q \) and \( \#\cC(\F_{q^2})=1-t^2+4q+2s+q^2 \).
In genus 2, therefore,
the point counting problem is to determine the integers $s$ and $t$.

\subsection{Real multiplication}
\label{sec:RM}

We are interested in Jacobians \(\JC\) 
with real multiplication by a fixed order \(\mathcal{O}\) 
in a quadratic real field \(F:=\QQ(\sqrt{\Delta})\);
that is, 
such that there is an embedding \(\iota \colon \mathcal{O} \to \End(\JC)\).
In this article,
we will further restrict to the case where \(\mathcal{O}\) 
is the maximal order \(\mathcal{O}_F\) of \(F\); 
note that if $\mathcal{O}$ is an order in $F$ 
that is not locally maximal at a prime $\ell$, 
then there exist no isogenies of degree $\ell$ that preserve the polarization 
(see Definition~\ref{def:mu-isogeny}).
These Jacobians can be constructed either from points in their moduli
spaces (as in~\S\ref{sec:invariants}),
or from a few known explicit families (as in~\S\ref{sec:experiments}).

The fixed field
\(\QQ(\pi + \dualof{\pi})\) of the Rosati involution on \(\QQ(\pi)\)
is a real quadratic field,
and \(\ZZ[\pi + \dualof{\pi}]\) is a suborder of
\(\mathcal{O}_F\).
The characteristic polynomial of \(\pi + \dualof{\pi}\) is
\[
    \chi_{\pi+\dualof{\pi}}(X) = (X^2 - tX + s)^2 \ ,
\]
so determining \(\chi_{\pi+\dualof{\pi}}\)
also solves the point counting problem for \(\JC\).

Later, we will be particularly interested in \(\C\)
such that \(\JC\) has real multiplication 
by an order of small discriminant.
While such curves are special,
from a cryptographic perspective they are not ``too special''.
From an arithmetic point of view,
all curves (with ordinary simple Jacobians) over \(\FF_q\) have real
multiplication.
Here, we simply require that real multiplication to have small
discriminant;
the discriminant of the entire endomorphism ring of \(\JC\) 
can still be just as large as for a general choice of curve over the same field.
From a geometric point of point view,
the moduli of these~\(\C\) live on two-dimensional Humbert surfaces
inside the three-dimensional moduli space of genus-2 curves.
In concrete terms, this means that 
when selecting random curves over a fixed \(\FF_q\),
only \({\sim 1/q}\) of them have real multiplication by a fixed order;
but if we restrict our choice to those curves then there are still
\(O(q^2)\) of them to choose from.

\subsection{From Schoof to Pila}
\label{sec:Pila}

The Schoof--Pila algorithm deals with higher
dimensions \cite{Schoof85,Pila90}.
Its input is a set of defining equations for a projective model of the
abelian variety, and its group law.
Jacobians of genus-2 curves are abelian varieties,
and we can apply Pila's algorithm to them
using the defining equations computed by Flynn~\cite{Flynn90}
or Grant~\cite{Grant90}.
However, the complexity of Pila's algorithm
is \(O((\log q)^\Delta)\),
where \(\Delta\) (and the big-O constant)
depends on the number of variables (i.e., the dimension of the ambient
projective space)
and the degree and number of the defining equations.
Pila derives an upper bound for \(\Delta\) in~\cite[\S4]{Pila90},
but when we evaluate this bound in the parameters of Flynn's model
for \(\JC\)
(72 quadratic forms in 16 variables)
we get a 30-bit \(\Delta\);
Grant's model (13 quadratic and cubic forms in 9 variables)
yields a 23-bit \(\Delta\).\footnote{%
     With polynomial time estimates like these, who needs enemies?
}
While these are only upper bounds,
we are clearly in the realm of the impractical here.

\subsection{The Gaudry--Schost approach}
\label{sec:Gaudry--Schost}
Pila's algorithm requires a concrete (and necessarily complicated)
nonsingular projective model for \(\JC\).
The Gaudry--Schost algorithm applies essentially the same ideas
to Mumford's affine models for subsets of \(\JC\).

Our first problem is to find an analogue for \(\JC\)
of the elliptic division polynomials~\(\Psi_\ell\).
Ultimately,
we want an ideal \(I_\ell = (F_0,\ldots,F_r) \subset \FF_q[A_1,A_0,B_1,B_0]\)
such that \(\Mumford{a}{b} = \Mumford{x^2 + a_1x + a_0}{b_1x + b_0}\)
is in \(\JC[\ell]\) 
if and only if \((a_1,a_0,b_1,b_0)\) is in the variety of \(I_\ell\):
that is, 
\[
    [\ell]\Mumford{x^2+a_1x+a_0}{b_1x+b_0} = 0
    \iff
    F(a_1,a_0,b_1,b_0) = 0 \text{ for all } F \in I_\ell
    \ .
\]
Then, 
the image of \(\Mumford{x^2 + A_1x + A_0}{B_1x + B_0}\)
in \(\JC(\FF_q[A_1,A_0,B_1,B_0]/I_\ell)\) 
is an element of order \(\ell\) 
that we can use for a Schoof-style computation 
of \(\chi(T) \pmod{\ell}\).

The simplest approach here would be to take a general Mumford representative
\(\Mumford{x^2 + A_1x + A_0}{B_1x + B_0}\),
compute \(L = [\ell]\Mumford{x^2 + A_1x + A_0}{B_1x + B_0}\),
and then equate coefficients in \(L = 0_{\JC}\)
to derive the relations in \(I_\ell\).
But we cannot do this, 
because \(L\) is in \(W_2(\FF_q(A_1,A_0,B_1,B_0))\)
(that is, its \(a\)-polynomial has degree 2, and its \(b\)-polynomial
degree 1),
while \(0_{\JC} = \Mumford{1}{0}\) is in \(W_0\):
these elements are not in the same affine subvariety, and cannot be
directly compared or equated in this form.

Gaudry and Harley~\cite{GauHar00} neatly stepped around this problem
by observing that any element of \(\JC\) can be written as the difference 
of two elements of \(W_1\) (which may be defined over a quadratic extension).
They therefore start with 
\(D = [(x_P,y_P)+(x_Q,y_Q)-2\infty] = [(x_P,y_P)-(x_Q,-y_Q)]\) in \(\JC\),
and find polynomial relations on \(x_P\), \(y_P\), \(x_Q\), and \(y_Q\)
such that \([\ell]D = 0\)
by computing \([\ell]\Mumford{x-x_P}{y_P}\)
and \([\ell]\Mumford{x-x_Q}{-y_Q}\),
and equating coefficients in 
\([\ell]\Mumford{x-x_P}{y_P} = [\ell]\Mumford{x-x_Q}{-y_Q}\).
There is a quadratic level of redundancy in these relations,
which is a direct result of the redundancy in the initial
representation of \(D\):
the involution \((x_P,y_P)\leftrightarrow(x_Q,y_Q)\) fixes \(D\).

Gaudry and Schost remove this redundancy by 
resymmetrizing the relations
with respect to this involution,
re-expressing them in terms of
\(A_1 = -(x_P + x_Q)\),
\(A_0 = x_Px_Q\), 
\(B_1 = (y_P-y_Q)/(x_P-x_Q)\),
and
\(B_0 = (x_Py_Q-x_Qy_P)/(x_P-x_Q)\),
and computing a triangular basis
for the resulting \emph{division ideal} \(I_\ell\).
Their algorithm yields a triangular basis for \(I_\ell\),
which facilitates fast reduction modulo \(I_\ell\).

Once we have \(I_\ell\),
we can compute \(t\pmod{\ell}\) and \(s\pmod{\ell}\)
as follows:
\begin{enumerate}
    \item
        Construct the symbolic \(\ell\)-torsion point
        \[
            P 
            := 
            \Mumford{x^2 + A_1x + A_0}{B_1x + B_0}
            \in 
            \JC(\FF_q[A_1,A_0,B_1,B_0]/I_\ell)
            \ ;
        \]
    \item \label{item:compute-Qs-Qt-R-mod-ell}
        Compute the points
        \begin{align*}
            Q_s & := \pi^2(P) \ ,
            \\
            Q_t & := \pi(\pi^2(P) + [q\bmod{\ell}]\pi(P)) \ ,
            \\
            R & := \pi^4(P) + [2q\bmod{\ell}]\pi^2(P) + [q^2\bmod{\ell}]P
        \end{align*}
        using Cantor arithmetic, 
        with reduction of coefficients modulo \(I_\ell\);
    \item \label{item:compute-s-t-mod-ell}
        Search for \(0 \le s_\ell, t_\ell < \ell\)
        such that 
        \[
            [t_\ell]Q_t - [s_\ell]Q_s = R
        \]
        (using, say, a two-dimensional baby-step giant-step algorithm).
\end{enumerate}
The result is an algorithm that runs in time \(\softO(\log^8q)\).
Of course,
once \(t\) has been determined, we can simplify Steps 
\eqref{item:compute-Qs-Qt-R-mod-ell} and
\eqref{item:compute-s-t-mod-ell} above
to find \(s_\ell\) more quickly for the remaining \(\ell\),
but this does not change the asymptotic complexity.
In practice, 
the algorithm has been used 
to construct cryptographically secure curves:
Gaudry and Schost computed a generic genus-2 curve 
over \(\FF_{2^{127} - 1}\)
such that both the Jacobian and its quadratic twist 
have prime order~\cite{GauSch08}.
Instances of the discrete logarithm problem in this Jacobian 
offer a claimed security level of roughly 128 bits,
which is the current minimum for serious cryptosystems.
This computation also represents the current record 
for point counting for general genus-2 curves.

The Gaudry--Schost computation illustrates
not only the state-of-the-art of genus-2 point counting, 
but also the practical challenge involved 
in producing cryptographically strong genus-2 Jacobians.
The Schoof-like point counting algorithm was only applied using
the prime powers $2^{17}$, $3^9$, $5^4$, and $7^2$, 
and the primes \(11\) through \(31\).
Combining the information given by these prime powers 
completely determines $t$, but not \(s\);
but it still gives us enough modular information about
\(s\) to be able to recover its precise value 
using Pollard's kangaroo algorithm 
in a reasonable time ($\approx2$ hours, in this case).
The kangaroo algorithm is exponential, and would not be practical for
computing this Jacobian order alone without the congruence data generated
by the Schoof-like computations.
Gaudry and Schost estimated the average cost of these calculations as one core-month (in 2008) per curve.

\subsection{Point counting with efficiently computable RM}

In~\cite{Gaudry--Kohel--Smith},
Gaudry, Kohel, and Smith
described a number of improvements to the Gaudry--Schost algorithm
that apply when \(\JC\) is equipped with an explicit and efficiently computable
endomorphism \(\phi\) generating a real quadratic subring of \(\End(\JC)\).
When we say that \(\phi\) is \emph{explicit}
we mean that we can compute the images under \(\phi\) 
of divisor classes on \(\JC\),
including symbolic Mumford representatives for generic divisor classes.
When we say that \(\phi\) is \emph{efficiently computable},
we mean that these images can be computed for a cost comparable with a
few group operations: that is, from an algorithmic point of view, 
we may view evaluation of \(\phi\) as an elementary group operation
like adding or doubling.

Suppose that \(\ZZ[\pi + \dualof{\pi}]\) is contained in \(\ZZ[\phi]\)
(this is reasonable, since in the examples we know, \(\ZZ[\phi]\) is
a maximal order), and let \(\Delta\) be the discriminant of \(\ZZ[\phi]\).
Then \(\pi + \dualof{\pi} = m\phi + n\) for some \(m\) and \(n\),
which completely determine \(s\) and \(t\):
if the characteristic polynomial of \(\phi\)
is \(\chi_\phi(X): (X^2 - t_\phi X + s_\phi)^2 \),
then \(t = 2m + nt_\phi\)
and \(s = (t^2 - s_\phi^2\Delta)/4\).
It follows that \(m\) and \(n\) are both in \(O(\sqrt{q})\).

We can compute \(m\) and \(n\)
using a technique similar to Gaudry--Schost.
Multiplying the relation \(\pi + \dualof{\pi} = m\phi + n\) through by \(\pi\),
we have \(\pi^2 - (m\phi + n)\pi + q = 0\).
Imitating Schoof's algorithm,
we can compute \(m_\ell := m \pmod{\ell}\)
and \(n_\ell := n \pmod{\ell}\)
by taking a generic element \(D\) of \(\JC[\ell]\)
(as in Gaudry--Schost),
computing \((\pi^2 + q)(D)\),
\(\pi(D)\), and \(\phi\pi(D)\) (using two applications of \(\pi\)),
and then solving for \(m_\ell\) and \(n_\ell\).

We can do even better by exploiting split primes in \(\ZZ[\phi]\).
If \(\ell = \frakell_1\frakell_2\) is split,
then the \(\ell\)-torsion decomposes
as \(\JC[\frakell_1]\oplus\JC[\frakell_2]\),
and once we have found a short generator (or generators) for
\(\frakell_i\) we can take \(D\) to be an element of \(\JC[\frakell_i]\)
instead of \(\JC[\ell]\).
Such generators can be found with coefficients in \(O(\sqrt{\ell})\);
the result is that we work modulo a much smaller ideal,
of degree \(O(\ell^2)\) rather than \(O(\ell^4)\).

But going further,
\(\pi + \dualof{\pi}\) acts as a scalar on \(\JC[\frakell_i]\),
and so we can compute its eigenvalue to determine \(m_\ell\) and
\(n_\ell\).  The total cost of computing \(m_\ell\) and \(n_\ell\),
and hence \(t_\ell\) and \(s_\ell\),
is then \(\softO(\log^5q)\)~\cite[Theorem 1]{Gaudry--Kohel--Smith},
a substantial improvement on Gaudry--Schost's \(\softO(\log^8q)\).

The computation resembles what we would do for an Elkies
prime in the elliptic case, 
except that there is no need for modular polynomials to compute the prime type, or for an
analogue of Elkies' algorithm: we know in advance which primes split in
\(\ZZ[\phi]\), and we can compute the kernel using the decomposition.
But if we did have an analogue of Elkies' algorithm,
then we could further reduce the complexity by further decomposing
some of the \(\JC[\frakell_i]\) into cyclic factors,
and thus working modulo ideals of degree \(O(\ell)\).
If we have an analogue of Atkin's algorithm,
then we can restrict the possible values of \(m_\ell\) and \(n_\ell\);
this would not change the asymptotic complexity of the algorithm,
but it could have a significant practical impact.

\subsection{Generalizing Elkies' and Atkin's improvements to genus 2}

Ultimately, we would like to generalize the SEA algorithm to genus 2.
The first requirement is a genus-2 analogue of elliptic modular polynomials;
so assume for the moment that we have a modular ideal 
relating suitable invariants of genus-2 curves.

To generalize Elkies' improvements to genus 2,
we need an analogue of Elkies' algorithm:
that is, an algorithm which, given two general moduli points corresponding to
isogenous Jacobians, constructs defining polynomials for (the kernel of)
the isogeny.
The most convenient such presentation would be as an ideal
cutting out the intersection of the kernel with \(W_2\),
since then the Gaudry--Schost approach could be adapted 
without too much difficulty (at least in theory). 
Unfortunately, at present, no such algorithm is known.

In contrast, Atkin's techniques for elliptic curves require only 
the factorization of (specializations of) elliptic modular polynomials;
we deduce possible congruences on the trace from the degrees
of the factors.
It is clear how we should generalize Atkin's techniques to genus 2:
we should deduce possible congruences on \(s\) and \(t\)
from the degrees of primary components of specialized modular ideals.

The following sections make this concrete.
In \S\ref{sec:invariants}, we define the appropriate analogues of the
elliptic \(j\)-invariant for genus-2 curves with real multiplication.
We can then define real-multiplication analogues of the elliptic modular
polynomials in \S\ref{sec:modular-ideals},
before investigating their factorization in \S\ref{sec:Atkin-g2}.

\subsection{\texorpdfstring{\(\mu\)}{mu}-isogenies}
Before defining any generalized invariants or modular polynomials,
we must define an appropriate class of isogenies in genus~2:
that is, isogenies that are compatible with the real
multiplication structure.
(This is not an issue for elliptic curves, 
because the elliptic analogue of the real endomorphism subring 
is just \(\ZZ\)---and everything is compatible with integer
multiplications.)

\begin{definition}
    \label{def:mu-isogeny}
    Let \((\AV,\xi,\iota)\)
    and \((\AV',\xi',\iota')\)
    be triples encoding principally polarized abelian surfaces
    with real multiplication by \(\mathcal{O}_F\).
    Here $\xi\colon\AV \to \AV^{\vee}$ 
    and $\xi'\colon\AV' \to (\AV')^{\vee}$ 
    are principal polarizations,
    and $\iota\colon \mathcal{O}_{F} \hookrightarrow \End(\AV)$ 
    and $\iota'\colon \mathcal{O}_{F} \hookrightarrow \End(\AV')$ 
    are embeddings that are stable under the Rosati involution.
    If \(\mu\) is a totally positive element of $F$, 
    then a \emph{$\mu$-isogeny} 
    $(\AV,\xi,\iota) \to (\AV',\xi',\iota')$ 
    is an isogeny $f \colon \AV \to \AV'$ 
    such that the diagrams
    \[
        \xymatrix{
            \AV
            \ar[r]^{\iota(\mu)}
            \ar[d]_{f}
            &
            \AV\ar[r]^{\xi}
            &
            \AV^\vee
            \\
            \AV'
            \ar[rr]_{\xi'}
            &
            &
            (\AV')^\vee
            \ar[u]_{f^\vee}
        }
        \qquad
        \text{and}
        \qquad
        \xymatrix{
            F
            \ar[r]^{\iota}
            \ar[dr]_{\iota'} 
            & 
            *+[r]{\End(\AV) \otimes \mathbb{Q}} 
            \ar[d]^{\phi}
            \\
            & 
            *+[r]{\End(\AV') \otimes \mathbb{Q}}
        }
    \]
    commute, where $\phi$ is the map induced by $f$ on endomorphism algebras.
\end{definition}
    
If \(f \colon (\AV,\xi,\iota) \to (\AV',\xi',\iota')\) is a \(\mu\)-isogeny,
then the polarization \(\xi'\) pulls back via \(f\) to \(\xi\circ\iota(\mu)\).
For comparison, 
an elliptic \(\ell\)-isogeny is an \(f \colon \EC \to \EC'\)
such that the canonical polarization on \(\EC'\) pulls 
back via \(f\) to \(\ell\) times the polarization on \(\EC\)
(in more concrete terms:
the identity point \(0_{\EC'}\) on \(\EC\)
pulls back via \(f\) to a divisor on~\(\EC\) equivalent to
\(\ell\cdot0_{\EC}\)).

\section{
    Invariants
}
\label{sec:invariants}

Elliptic modular polynomials relate isogenous elliptic curves in terms
of their \(j\)-invariants;
their genus-2 analogues must relate invariants of genus-2 Jacobians.
This section describes and relates the various invariants that we will
need.
Since we are dealing with classical constructions in this section,
we work over a field \(k \subseteq \CC\).
However, the resulting algebraic expressions carry over 
to the case where \(k = \FF_q\)
(at least for large enough \(p\)).
All of the results in this section are well-known,
and are shown here for completeness and easy reference;
we refer the reader to \cite{Lan2}, \cite{LauNaeYan15}, \cite{LauYan11}, 
and \cite{Mar16} for further detail.

\subsection{Invariants for RM abelian surfaces}

Let \(F\) be a real quadratic field
with ring of integers \(\mathcal{O}_F\).
We need RM analogues of the elliptic \(j\)-invariant
and elliptic modular polynomials 
for \(\mu\)-isogenies of abelian surfaces with RM by \(\mathcal{O}_F\).
Our first step is to define appropriate replacements for the \(j\)-invariant 
that classify our triples \((A,\xi,\iota)\) up to isomorphism.
Instead of a single \(j\)-invariant,
we will have a triple \((J_1,J_2,J_3)\) of \emph{RM invariants},
which are functions on the corresponding Hilbert modular surface.

The invariants \((J_1,J_2,J_3)\) are constructed as follows.
For a field $k$, we consider the coarse moduli space
$\mathcal{H}_F(k)$ of triples $(\AV,\xi,\iota)$ 
(where as before,
$\AV/k$ is an abelian variety
with a principal polarization $\xi\colon\AV\to\AV^{\vee}$
and an embedding $\iota\colon\mathcal{O}_F\hookrightarrow \End_{k}(\AV)$ 
stable under the Rosati involution). 
Then $\mathcal{H}_F(k)$ is coarsely represented by the Hilbert modular space
$\SL_2(\mathcal{O}_F\oplus\mathcal{O}_F)\setminus(F\otimes\mathbb{H})$
(see~\cite{vdG}),
where
\(
    F \otimes \mathbb{H} := 
    \{ \tau \in F \otimes \mathbb{C} : \Im(\tau) > 0 \}
\)
and for any fractional ideal $\mathfrak{f}$ of $F$, 
\[
    \SL_2(\mathcal{O}_F\oplus\mathfrak{f})
    :=
    \left\{
        \left(\begin{matrix}a& b\\c& d\end{matrix}\right) \in \SL_2(F)
        : a,d \in \mathcal{O}_F,\  b\in\mathfrak{f},\  c\in\mathfrak{f}^{-1}
    \right\}
\]
acts on \(F\otimes\mathbb{H}\) by
\[
    \left(\begin{matrix}a&b\\c&d\end{matrix}\right)
    \cdot
    \tau
    = 
    \frac{a\tau + b}{c\tau + d}
    \ .
\]

\begin{proposition}
    \label{prop:RM-invariants}
    Let $V$ be the Baily--Borel compactification
    of $SL_2(\mathcal{O}_F)\setminus(F\otimes\mathbb{H})$, 
    and $\CC(V)$ the function field of $V$. There exist rational functions
    \(J_1\), \(J_2\), and \(J_3\) 
    on \(V\)
    such that 
    \[
        \CC(V)=\CC(J_1,J_2,J_3)
        \ .
    \]
\end{proposition}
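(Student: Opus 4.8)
The plan is to reduce the statement to an elementary fact about finitely generated field extensions in characteristic zero, once we know that $V$ is an honest algebraic variety. First I would invoke the theorem of Baily and Borel, as used throughout the Hilbert modular setting (see~\cite{vdG}): the quotient $\SL_2(\mathcal{O}_F)\setminus(F\otimes\mathbb{H})$ is a quasi-projective complex variety, and its Baily--Borel compactification $V$ is a \emph{normal projective} variety over $\CC$. Since $F\otimes\mathbb{H}\cong\mathbb{H}\times\mathbb{H}$ is connected of complex dimension $2$, the variety $V$ is irreducible of dimension $2$. Consequently $\CC(V)$ is a finitely generated field extension of $\CC$ of transcendence degree~$2$ (and, being a birational invariant, it coincides with the function field of the open surface).

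The remaining step is purely field-theoretic. Because $\CC$ has characteristic zero, the extension $\CC(V)/\CC$ is separably generated, so it admits a separating transcendence basis. Concretely, I would choose two algebraically independent rational functions $J_1,J_2\in\CC(V)$ forming such a basis, so that $\CC(V)$ is a finite \emph{separable} extension of the rational subfield $\CC(J_1,J_2)$. The primitive element theorem then furnishes a single $J_3\in\CC(V)$ with $\CC(V)=\CC(J_1,J_2)(J_3)$; equivalently $\CC(V)=\CC(J_1,J_2,J_3)$, as claimed. Thus the count ``$\dim V + 1 = 3$ generators'' is exactly what one expects for the function field of a surface over a field of characteristic zero, whether or not $V$ is rational.

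The only genuinely nontrivial input is the first step: the assertion that the Baily--Borel compactification is a bona fide projective algebraic variety, so that $\CC(V)$ is a finitely generated field of finite transcendence degree rather than merely the field of meromorphic functions on an analytic space. This is precisely the content of the Baily--Borel theorem, which I would cite rather than reprove; everything after it is standard. I would also emphasize that the generators $J_1,J_2,J_3$ produced by this argument are far from canonical: the proposition only guarantees existence, whereas for the point-counting applications one ultimately wants \emph{explicit} Hilbert modular functions (symmetric Hilbert modular forms or theta quotients) realizing such a generating triple. Pinning down convenient explicit choices is a separate, constructive task, addressed in the references to Lauter--Naehrig--Yang and Martindale; the present statement merely certifies that three functions always suffice.
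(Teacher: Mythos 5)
Your proof is correct and takes essentially the same route as the paper's: transcendence degree $2$ yields two algebraically independent functions $J_1,J_2$, and since $\CC(V)/\CC(J_1,J_2)$ is a finite separable extension (automatic in characteristic zero), a single further generator $J_3$ suffices. The paper's version is simply terser, leaving the Baily--Borel input and the primitive element theorem implicit.
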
 
\begin{proof}
    The transcendence degree of \(\CC(V)\) over $\CC$ is $2$,
    so there exist $2$ algebraically independent functions
    $J_{1}$, $J_{2}$ in $\CC(V)$. 
    Furthermore, $\CC(V)$ is a finite separable field extension
    of $\CC(J_{1},J_{2})$, so it is generated by at most one further element,~$J_{3}$.
\end{proof}

\begin{definition}
    Fixing a choice of rational functions \(J_1\), \(J_2\), and \(J_3\)
    as in Proposition~\ref{prop:RM-invariants},
    we call \((J_1,J_2,J_3)\) the \emph{RM invariants} for \(F\).
\end{definition}

\subsection{Hilbert modular polynomials for RM abelian surfaces}
\label{sec:modular-ideals}

We are now ready to define modular polynomials
for abelian surfaces with RM structure.
For elliptic curves we have a single \(j\)-invariant,
and we can relate \(\ell\)-isogenous \(j\)-invariants 
using a single bivariate polynomial \(\Phi_\ell(X,Y)\).
For our abelian surfaces, we have a tuple of three invariants 
\((J_1,J_2,J_3)\),
and to relate \(\mu\)-isogenous tuples of invariants
we need a \emph{modular ideal} of polynomials in
\(\QQ[X_1,X_2,X_3,Y_1,Y_2,Y_3]\),
such that when we specialize the first three variables in
the \((J_1,J_2,J_3)\) corresponding to the isomorphism class of some triple \((A,\xi,\iota)\),
the result is an ideal cutting out the moduli points
\((J_1',J_2',J_3')\)
for triples \((A',\xi',\iota')\)
that are \(\mu\)-isogenous to \((A,\xi,\iota)\).

The \emph{Hilbert modular polynomials} below 
represent a particularly convenient basis for this ideal.
We refer the reader to~\cite[Chapter 2]{Mar16}
for theoretical details and proofs,
as well as algorithms for computing the polynomials.
Alternatively, Milio's algorithm can be used to compute Hilbert
modular polynomials 
$\Phi_\ell(X,\mathfrak{J}_1,\mathfrak{J}_2)$ and
$\Psi_\ell(X,\mathfrak{J}_1,\mathfrak{J}_2)$,
in time
$O(d_Td_{\mathfrak{J}_2})\tilde{O}(\ell N) + 4 (\ell+1)
\tilde{O}(d_Td_{\mathfrak{J}_2}N) \subseteq
\tilde{O}(d_Td_{\mathfrak{J}_2}\ell N) $
\cite[Th.~5.4.4]{Milio-thesis},
where $N$ is the precision and $d_T, d_{\mathfrak{J}_2}$ are degrees
involved in the computation, see \cite[\S 5.4]{Milio-thesis}.

\begin{definition}
	The {\em Hilbert modular polynomials} 
    \begin{align*}
        & G_\mu(X_1,X_2,X_3,Y_1)
        \ ,
        \\
        & H_{\mu,2}(X_1,X_2,X_3,Y_1,Y_2)
        = H_{\mu,2}^{(1)}(X_1,X_2,X_3,Y_1)Y_2
          + H_{\mu,2}^{(0)}(X_1,X_2,X_3,Y_1)
        \ ,
        \\
        & H_{\mu,3}(X_1,X_2,X_3,Y_1,Y_3)
        = H_{\mu,3}^{(1)}(X_1,X_2,X_3,Y_1)Y_3
          + H_{\mu,3}^{(0)}(X_1,X_2,X_3,Y_1)
    \end{align*}
    in \(\QQ[X_1,X_2,X_3,Y_1,Y_2,Y_3]\) 
    are defined such that
    for all triples
    \((\AV,\xi,\iota)\) and \((\AV',\xi',\iota')\)
    representing points \(\tau\) and \(\tau'\) in
    a certain Zariski-open subset\footnote{ 
        See~\cite[Chapter 2, Section 2]{Mar16} for details on this subset.
        For point counting over large finite fields,
        it is enough to note that since the subset is Zariski open,
        randomly sampled Jacobians with real multiplication by
        \(\mathcal{O}_F\) have their RM invariants in this subset with
        overwhelming probability.

    }
    of the Baily--Borel compactification of
    \(\SL_2(\mathcal{O}_F\oplus\mathfrak{f})\setminus(F\otimes\mathbb{H})\),
    there exists a $\mu$-isogeny
    \(f \colon (\AV,\xi,\iota)\to(\AV',\xi',\iota')\)
    if and only if
    \begin{align*}
	    G_\mu(J_1(\tau),J_2(\tau),J_3(\tau),J_1(\tau')) = 0 \ ,\\
        H_{\mu,2}(J_1(\tau),J_2(\tau),J_3(\tau),J_1(\tau'),J_2(\tau')) = 0 \ ,\\
        H_{\mu,3}(J_1(\tau),J_2(\tau),J_3(\tau),J_1(\tau'),J_3(\tau')) = 0 \ .
    \end{align*}
\end{definition}

The special form of \(G_\mu\), \(H_{2,\mu}\), and \(H_{3,\mu}\)
are very convenient for computations.
If \((J_1,J_2,J_3)\) is a fixed moduli point,
then each root \(\alpha\) of \(G(J_1,J_2,J_3,x)\)
corresponds to a unique \(\mu\)-isogenous moduli point
\[
    \left(J_1',J_2',J_3'\right)
    =
    \left(
        \alpha,
        -\frac{H_{\mu,2}^{(0)}(J_1,J_2,J_3,\alpha)}{H_{\mu,2}^{(1)}(J_1,J_2,J_3,\alpha)},
        -\frac{H_{\mu,3}^{(0)}(J_1,J_2,J_3,\alpha)}{H_{\mu,3}^{(1)}(J_1,J_2,J_3,\alpha)}
    \right)
    \ .
\]
We observe that the action of Galois on the set of \(\mu\)-isogenies
from an RM abelian variety representing \((J_1,J_2,J_3)\)
is completely described by the action of Galois on the roots of
\(G_\mu(J_1,J_2,J_3,x)\);
in particular, over \(\FF_q\),
rational cycles of \(\mu\)-isogenies under Frobenius
correspond to irreducible factors of \(G_\mu(J_1,J_2,J_3,x)\).
From the point of view of Atkin generalizations,
therefore,
we only really need \(G_\mu\) to replace~\(\Phi_\ell\).

\subsection{Invariants for curves and abelian surfaces}
\label{sec:curve-invariants}

We need to relate the RM invariants \((J_1,J_2,J_3)\)
to the invariants for plain old principally polarized abelian surfaces,
and in particular Jacobians of genus 2 curves
without any special RM structure.
The moduli space $\AV_2$ of principally polarized abelian surfaces
is coarsely represented by the Siegel modular space
\(\Sp_2(\ZZ)\backslash\mathbb{H}_2\),
where
\[
    \mathbb{H}_2
    :=
    \left\{
        \tau = \left(
            \begin{matrix}\tau_1&\tau_2\\ \tau_2&\tau_3\end{matrix}
        \right)
        \in
        \text{Sym}_2(\CC): \Im(\tau) > 0
    \right\}
    \ ,
\]
and the symplectic group 
\[
    \Sp_2(\ZZ)
    =
    \left\{ 
        g \in \GL_4(\ZZ)
        :
        g\left(\begin{matrix}0&I_2\\-I_2&0\end{matrix}\right)g^t
        =\left(\begin{matrix}0&I_2\\-I_2&0\end{matrix}\right)
    \right\}
\]
acts on \(\mathbb{H}_2\)
via
\[
    \left(\begin{matrix}a&b\\c&d\end{matrix}\right)
    \cdot
    \tau
    =
    \frac{
        a\tau + b
    }{
        c\tau + d
    }
    \ .
\]

Every rational function on $\Sp_2(\ZZ)\backslash\mathbb{H}_2$
is a quotient of elements of the graded ring of holomorphic Siegel
modular forms for \(\Sp_2(\ZZ)\).
Igusa proved in~\cite{Igusa62} that 
this ring is generated by 
$\psi_4$, $\psi_6$, $\chi_{10}$, and $\chi_{12}$,
where
\[
    \psi_k(\tau)
    =
    \sum_{
        \left(\begin{smallmatrix}a&b\\c&d\end{smallmatrix}\right)
        \in
        P\setminus\Sp_2(\ZZ)
    }
    \mathrm{det}(c\tau+d)^{-k}
\]
is the normalized Eisenstein series of weight $k$ for even integers $k\geq4$
(here $P$ is the standard Siegel parabolic subgroup of $\Sp_2(\ZZ)$),
and
\begin{align*}
    \chi_{10}
    & =
    -2^{-12}\cdot3^{-5}\cdot5^{-2}\cdot7^{-1}\cdot 43867(\psi_4\psi_6-\psi_{10})
    \ ,
    \\
    \chi_{12}
    & =
    2^{-13}\cdot3^{-7}\cdot5^{-3}\cdot7^{-2}\cdot337^{-1}\cdot 131\cdot 593
    (3^2\cdot7^2\psi_{4}^3 + 2\cdot5^3\psi_{6}^{2} - 691\psi_{12})
\end{align*}
are Siegel modular cusp forms of weight $10$ and $12$ respectively. 

Curves of genus 2 are typically classified up to isomorphism
by their Igusa invariants \((j_1,j_2,j_3)\),
or by their Igusa--Clebsch invariants \((A,B,C,D)\).
Since the map $\C\mapsto \JC$ is an open immersion 
of the (coarse) moduli space of genus-2 curves $\mathcal{M}_2$ into $\AV_2$,
the Igusa invariants $j_i$ can be written as rational functions of 
$\psi_4$, $\psi_6$, $\chi_{10}$ and $\chi_{12}$ as follows~\cite{Igusa67}:
\begin{align*}
    j_1(\tau) 
    & =
    2\cdot 3^5\cdot\chi_{12}^5\chi_{10}^{-6}
    \ ,
    \\
    j_2(\tau) 
    & =
    2^{-3}\cdot3^3\cdot\psi_4\chi_{12}^3\chi_{10}^{-4}
    \ ,
    \\
    j_3(\tau)
    & =
    2^{-5}\cdot 3\cdot \left(
        \psi_6\chi_{12}^2\chi_{10}^{-3}
        +
        2^2\cdot3\cdot\psi_4\chi_{12}^3\chi_{10}^{-4}
    \right)
    \ .
\end{align*}
Here $j_i(\tau)=j_i(\C)$ if there is a genus 2 curve $\C/\CC$ 
such that $\JC$ is isomorphic to the abelian surface $\CC^2/(\ZZ^2\tau+\ZZ^2)$.
If there is no such \(\C\),
which happens exactly when $\chi_{10}(\tau)=0$,
then $j_i(\tau)$ is not well-defined.
The Igusa--Clebsch invariants
are related to the Siegel modular forms by
\begin{align}
    \label{eq:psi-from-IgusaClebsch}
    \left( 
        \psi_{4} ,\ 
        \psi_{6} ,\ 
        \chi_{10} ,\ 
        \chi_{12} 
    \right)
    &= 
    \left(
        2^{-2}B ,\ 
        2^{-3}(AB-3C) ,\ 
        -2^{-14}C ,\ 
        2^{-17}3^{-1}AD 
    \right) 
    \ .
\end{align}

\subsection{Pulling back curve invariants to RM invariants}
\label{sec:pullback}
The natural maps
\( \mathbb{H}^2 \to \mathbb{H}_2 \),
\( \SL_2(F) \to \Sp_2(\QQ) \),
and
\( (\mathcal{O}_F/2\mathcal{O}_F)^2 \to (\ZZ/2\ZZ)^4 \)
induce an embedding 
\[
    \phi\colon \mathcal{H}_F(k) \hookrightarrow \AV_2(k)
    \ ,
\]
which we can use to pull back Igusa invariants to RM invariants,
thus expressing the~\(j_i\) in terms of the~\(J_i\).
We will see detailed formul\ae{} 
for this pullback for \(F = \QQ(\sqrt{5})\) 
in Proposition~\ref{pullbacks}.

This pullback
from curves and their invariants to RM invariants is essential for
our computations:
after all, in point counting one usually starts from a curve.
In our applications, we are given the equation of a curve \(\C/\FF_q\)
drawn from a family of curves with known RM by \(\mathcal{O}_F\).
Having computed the Igusa or Igusa--Clebsch invariants of \(\C\),
we can pull them back to RM invariants \((J_1,J_2,J_3)\).
This pullback is possible, because \(\C\) was chosen from an appropriate
family, but choosing a preimage \((J_1,J_2,J_3)\)
implicitly involves choosing one of the two embeddings
of \(\mathcal{O}_F\) into \(\End(\JC)\).
This choice cannot always be made over the ground field:
a point in \(\mathcal{A}_2(k)\)
may not pull back to a pair of points in \(\mathcal{H}_F(k)\),
but rather a conjugate pair of points over a quadratic extension of \(k\).
Proposition~\ref{invformulae} 
makes this subtlety explicit in the case \(F = \QQ(\sqrt{5})\).

\section{
    Atkin theorems in genus 2
}
\label{sec:Atkin-g2}

We are now ready to state some Atkin-style results
for \(\mu\)-isogenies in genus 2.

Let \((\AV,\xi,\iota)\) be a triple
describing a vanilla abelian surface over~\(\FF_q\)
with real multiplication by $\mathcal{O}_F$,
and let \(\mu\) be a totally positive element of \(\mathcal{O}_F\)
of norm \(\ell\).
Then \(\iota(\mu)\)
is an endomorphism of degree \(\ell^2\),
and we have a subgroup\footnote{%
    We emphasize that the subgroup \(\AV[\mu]\) depends on \(\iota\),
    but we have chosen to write \(\AV[\mu]\) instead of the more
    cumbersome \(\AV[\iota(\mu)]\).
}
\[
    \AV[\mu] := \ker(\iota(\mu)) \subset \AV[\ell]
    \ .
\]
If \((\bar\mu) \not= (\mu)\)
(that is, \((\ell)\not=(\mu^2)\)),
then we have a decomposition 
\(
    \AV[\ell] = \AV[\mu] \oplus \AV[\bar\mu]
\).
The one-dimensional subspaces of \(\AV[\mu]\)
are the kernels of \(\mu\)-isogenies.

In~\S\ref{sec:SEA}
we used the elliptic modular polynomial \(\Phi_\ell\)
to study the structure of~\(\EC[\ell]\).
Here, we will use the Hilbert modular polynomial \(G_\mu\) to study the structure of~\(\AV[\mu]\).
The propositions of this section are generalizations for curves of genus $2$
to Schoof's Propositions 6.1, 6.2 and 6.3 for elliptic curves in \cite{Schoof95}.

\subsection{Roots of \texorpdfstring{\(G_\mu\)}{Gmu} and the order of Frobenius}

Our first result relates the order of Frobenius acting on
\(\PP(\AV[\mu])\) to the extensions of $\mathbb{F}_q$ generated by roots of
specialized Hilbert modular polynomials.

\begin{proposition}\label{prop:61}
    Let $\AV/\FF_q$ be a vanilla abelian surface 
    with RM by \(\mathcal{O}_F\)
    and RM invariants \((J_1,J_2,J_3)\) in \(\FF_q^3\),
    and with Frobenius endomorphism \(\pi\).
    Let \(\mu\) be a totally positive element of \(\mathcal{O}_F\)
    of prime norm $\ell=\mu\overline{\mu}$.
    \begin{enumerate}
        \item \label{prop:item:zero-Gmu}
            The polynomial $G_\mu(J_1,J_2,J_3,x)$ 
            has a zero \(\tilde{J}_1\) in \(\mathbb{F}_{q^e}\)
            if and only if the kernel 
            of the corresponding \(\mu\)-isogeny \(\AV\rightarrow\tilde{\AV}\) 
            is a $1$-dimensional eigenspace of $\pi^e$ in $\AV[\mu]$. 
        \item \label{prop:item:split-Gmu}
            The polynomial $G_\mu(J_1,J_2,J_3,x)$ splits completely in
            $\mathbb{F}_{q^e}[x]$ if and only if $\pi^e$ acts as a
            scalar matrix on $\AV[\mu]$.
    \end{enumerate}
\end{proposition}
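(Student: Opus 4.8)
The plan is to mirror Schoof's proof of the elliptic analogue (Lemma~\ref{lemma:Schoof-6-1-1}), replacing \(\EC[\ell]\) by \(\AV[\mu]\) and \(\Phi_\ell\) by \(G_\mu\), so that the whole statement reduces to two-dimensional linear algebra over \(\FF_\ell\) together with a Galois-equivariant dictionary between roots and subgroups. First I would record the linear structure. Since \(\iota\) is defined over \(\FF_q\), the Frobenius endomorphism \(\pi\) commutes with \(\iota(\mu)\) (by functoriality, \(\pi\) commutes with every \(\FF_q\)-rational endomorphism), so \(\pi\) stabilises \(\AV[\mu]=\ker\iota(\mu)\) and acts on \(\AV[\mu]\cong\FF_\ell^2\) as an element of \(\GL_2(\FF_\ell)\). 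This is exactly what lets us speak of eigenspaces of \(\pi^e\) and of the induced action on \(\PP(\AV[\mu])\cong\PP^1(\FF_\ell)\).

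Next I would assemble the dictionary. By the defining property of \(G_\mu\), together with the triangular shape of the Hilbert modular polynomials (the \(H_{\mu,i}\) are linear in \(Y_i\), so each root of \(G_\mu(J_1,J_2,J_3,x)\) determines a single \(\mu\)-isogenous moduli point), the roots of \(G_\mu(J_1,J_2,J_3,x)\) are in bijection with the \(\mu\)-isogenies out of \(\AV\), and hence with the one-dimensional subspaces of \(\AV[\mu]\)—that is, the points of \(\PP(\AV[\mu])\). The vanilla hypothesis guarantees that this correspondence is an honest bijection, with no collapsing of distinct isogenies and no repeated roots from extra automorphisms. Crucially, it is Galois-equivariant: the action of the \(q\)-power Frobenius on a root \(\tilde{J}_1\in\FFbar_q\) matches the action of \(\pi\) on the corresponding kernel, as noted following the definition of \(G_\mu\).

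With the dictionary in hand both parts are formal. For part~\ref{prop:item:zero-Gmu}, a root \(\tilde{J}_1\) lies in \(\FF_{q^e}\) if and only if it is fixed by the \(e\)th power of the \(q\)-Frobenius, if and only if the corresponding kernel is \(\pi^e\)-stable; and a one-dimensional subspace of \(\FF_\ell^2\) is stable under the linear map \(\pi^e\) precisely when it is spanned by an eigenvector, i.e.\ is a one-dimensional eigenspace of \(\pi^e\). For part~\ref{prop:item:split-Gmu}, \(G_\mu(J_1,J_2,J_3,x)\) splits completely over \(\FF_{q^e}\) if and only if all of its roots lie there, if and only if every point of \(\PP(\AV[\mu])\) is fixed by \(\pi^e\), if and only if every nonzero vector is a \(\pi^e\)-eigenvector; and a matrix in \(\GL_2(\FF_\ell)\) all of whose nonzero vectors are eigenvectors must be scalar (if \(e_1,e_2\) are eigenvectors with eigenvalues \(a,b\), then \(e_1+e_2\) is an eigenvector only when \(a=b\)).

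The hard part will be the middle step: making the dictionary genuinely bijective and Galois-equivariant. The surjection from \(\mu\)-isogeny kernels to roots of \(G_\mu\) is essentially the definition, but one must check that the \(q\)-Frobenius on moduli points really does correspond to \(\pi\) on kernels—this is where functoriality of Frobenius and the fact that the moduli interpretation is defined over \(\FF_q\) do the work—and that the vanilla hypothesis rules out the automorphism-induced degeneracies that would otherwise spoil the bijection, playing exactly the role of the conditions \(j\neq 0,1728\) in Schoof's elliptic argument.
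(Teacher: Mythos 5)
Your overall strategy is the same as the paper's (transporting Schoof's Proposition~6.1 to $\AV[\mu]$ and $G_\mu$), and your reduction of both parts to a Galois-equivariant bijection between roots of $G_\mu(J_1,J_2,J_3,x)$ and one-dimensional subspaces of $\AV[\mu]$ is the right skeleton. The linear-algebra endgame (a stable line is an eigenline; all lines eigenlines forces a scalar matrix) and the use of the linearity of the $H_{\mu,i}$ to promote a root $\tilde J_1$ to a full moduli point are both correct and match the paper.

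However, there is a genuine gap: the Galois-equivariance of your dictionary in the direction ``root in $\FF_{q^e}$ $\Rightarrow$ kernel is $\pi^e$-stable'' is exactly the nontrivial content of the proposition, and you defer it to ``functoriality of Frobenius and the fact that the moduli interpretation is defined over $\FF_q$.'' That does not suffice, because the Hilbert modular surface is only a \emph{coarse} moduli space: a root $\tilde J_1\in\FF_{q^e}$ yields a target triple $(\tilde\AV,\tilde\xi,\tilde\iota)$ that is merely $\FFbar_q$-isomorphic to a triple $(\AV',\xi',\iota')$ defined over $\FF_{q^e}$, and the composite $\mu$-isogeny $f\colon\AV\to\AV'$ has no a priori reason to be $\FF_{q^e}$-rational. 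The paper closes this with a specific argument you would need to supply: $\Hom_{\FFbar_q}(\AV,\AV')$ is free of rank $4$ (here the vanilla hypothesis enters) and its submodule $\Hom_{\FF_{q^e}}(\AV,\AV')$ is either $0$ or everything, so rationality of $f$ reduces to the Frobenius endomorphisms $\psi=\pi^e$ of $\AV/\FF_{q^e}$ and $\psi'$ of $\AV'$ having equal characteristic polynomials; one then uses $\psi^s=(\psi')^s$ for some $s>0$, passes to the quadratic twist when $\psi=-\psi'$, and rules out $\psi/\psi'$ being a root of unity of order $\geq 3$ precisely because vanilla endomorphism algebras contain no such roots of unity. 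Your invocation of ``vanilla rules out automorphism-induced degeneracies'' gestures at this but is not the argument; in particular the twisting step (which is why kernels, not isogenies, appear in the statement) is absent from your sketch. The forward direction (eigenspace $\Rightarrow$ rational root) also needs the small but explicit observation that the quotient $\AV\to\AV/S$ is defined over $\FF_{q^e}$ and that $\tilde\iota$ descends via the second diagram in the definition of a $\mu$-isogeny.
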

\begin{proof} 
    The proof follows that of~\cite[Proposition~6.1]{Schoof95}
    (stated as Lemma~\ref{lemma:Schoof-6-1-1} here).
        
    For \eqref{prop:item:zero-Gmu}: 
    Let \(f \colon \AV \to \tilde{\AV}\) be a \(\mu\)-isogeny with kernel \(S\),
    and let \((\tilde{J}_1,\tilde{J}_2,\tilde{J}_3)\)
    be the RM invariants of \(\tilde{\AV}\).
    If \(S\) is an eigenspace of \(\pi^e\),
    then the quotient \(\AV \to \AV/S\) is defined over \(\FF_{q^e}\).
    The Igusa invariants of \(\AV/S\) are therefore all in \(\FF_{q^e}\),
    and since \(\AV/S\) is isomorphic to \(\tilde{\AV}\)
    as a principally polarized abelian surface,
    the Igusa invariants of \(\tilde{\AV}\) are all in \(\FF_{q^e}\).
    To conclude that \(\tilde{J}_1\) is in \(\FF_{q^e}\),
    we need to show that the injection
    \(\tilde{\iota}\colon\mathcal{O}_F\hookrightarrow\End(\tilde{\AV})\) 
    is defined over \(\FF_{q^e}\);
    but this follows from the commutativity of the second diagram in
    Definition~\ref{def:mu-isogeny}.

    Conversely: suppose 
    $G_{\mu}(J_1,J_2,J_3,\tilde{J}_1)=0$
    for some \(\tilde{J}_1\) in \(\FF_{q^e}\).
    Then the fact that each of the \(H_{\mu,i}\)
    is a linear polynomial in \(Y_i\)
    with coefficients in \(\FF_q[J_1,J_2,J_3,\tilde{J}_1] = \FF_{q^e}\)
    shows that there exist \(\tilde{J}_2\) and \(\tilde{J}_3\)
    in \(\FF_{q^e}\)
    such that \((\tilde{J}_1,\tilde{J}_2,\tilde{J}_3)\)
    are the RM invariants of a triple \((\tilde{\AV},\tilde{\xi},\tilde{\iota})\)
    that is \(\mu\)-isogenous to \((\AV,\xi,\iota)\).
    This means that there is an \(\FFbar_q\)-isomorphism
    \(
        (\tilde{\AV},\tilde{\xi},\tilde{\iota})
        \to 
        (\AV',\xi',\iota') 
    \)
    where \((\AV',\xi',\iota')\) 
    is defined over~\(\FF_{q^e}\).
    Let \(f \colon \AV \to \AV'\)
    be the composite \(\mu\)-isogeny.
    Its kernel \(S\)
    is a one-dimensional subspace of \(\AV[\ell]\).
    It remains to show that \(S\)
    is an eigenspace of \(\pi^e\);
    this is the case if and only if \(f\) is defined over \(\FF_{q^e}\).
    The \(\ZZ\)-module 
    $\Hom_{\FFbar_q}(\AV,\AV')$
    is free of rank 4 (because \(\AV\) is vanilla);
    and its submodule
    $\Hom_{\FF_{q^e}}(\AV,\AV')$ of \(\FF_{q^e}\)-isogenies 
    is either 0 or equal to \(\Hom_{\FFbar_q}(\AV,\AV')\).
    Hence,
    \(f\) is defined over \(\FF_{q^e}\)
    if \(\Hom_{\FF_{q^e}}(\AV,\AV')\not=0\);
    and \(\Hom_{\FF_{q^e}}(\AV,\AV')\not=0\)
    if and only if the Frobenius endomorphisms 
    of \(\AV/\FF_{q^e}\) and \(\AV'\) have the same characteristic
    polynomial.

    Since \(\AV\) is vanilla,
    and \(\AV'\) is \(\FFbar_q\)-isogenous to \(\AV\),
    we have
    \( \End_{\FFbar_q}(\AV')\otimes\QQ
        \cong
        \End_{\FFbar_q}(\AV)\otimes\QQ
        \cong
        K
    \)
    for some quartic CM-field \(K\).
    So let \(\psi\) and \(\psi'\)
    be the images in \(K\) of the Frobenius endomorphisms 
    of \(\AV/\FF_{q^e}\) and \(\AV'\), respectively
    (note that \(\psi = \pi^e\)).
    Now up to complex conjugation,
    we have \(\psi^s = (\psi')^s\) in \(K\)
    for some \(s > 0\).
    If $\psi=\psi'$, then 
    $\AV$ and $\AV'$ are $\FF_{q^e}$-isogenous,
    and we are done.
    If $\psi=-\psi'$, 
    then we replace $(\AV',\xi',\iota')$
    by its quadratic twist;
    and then $\AV$ and $\AV'$ are $\FF_{q^e}$-isogenous.
    Otherwise, if $\psi\neq\pm\psi'$,
    then $\psi/\psi'$ must be a root of unity of order at least $3$ in \(K\),
    which is impossible because \(\AV\) is vanilla.
    Hence \(\psi = \psi'\),
    so \(\psi\) and \(\psi'\) have the same characteristic polynomial,
    and therefore \(f\) is defined over \(\FF_{q^e}\).
        
    For \eqref{prop:item:split-Gmu}: 
    If all of the zeroes of $G_{\mu}(J_1,J_2,J_3,x)$
    are contained in $\FF_{q^e}$, then 
    all of the $1$-dimensional subspaces of $\AV[\mu]$ 
    are eigenspaces of $\pi^e$ 
    by Part \eqref{prop:item:zero-Gmu}. 
    This implies that $\pi^e$ acts as a scalar matrix on
    \(\AV[\mu]\).
\end{proof}

\begin{remark}
    As an example of what can go wrong if the vanilla condition is dropped, 
    consider the curve
    \[
        \C : y^{2} = x^{5} + 1
        \ .
    \] 
    The Jacobian \(\JC\) of this curve has complex
    multiplication by $\QQ(\zeta_{5})$, so it is not vanilla.
    While \(\JC\) has real multiplication by the maximal order
    of \({\QQ(\sqrt{5})}\),
    the Siegel modular form $\psi_{4}$ is zero for this curve. 
    Proposition~\ref{invformulae} below
    gives explicit formul\ae{} for \(J_1\), \(J_2\), and \(J_3^2\)
    for Jacobians with maximal real multiplication by \(\QQ(\sqrt{5})\);
    and when we look at those formul\ae{},
    we see that \(J_1\) is not well-defined when \(\psi_4 = 0\).
\end{remark}

\subsection{The factorization of \texorpdfstring{\(G_\mu\)}{Gmu}}

The Frobenius endomorphism \(\pi\) of \(\AV\)
commutes with \(\iota(\mu)\) (since \(\AV\) is vanilla),
so it restricts to an endomorphism of \(\AV[\mu]\).

\begin{lemma}
    \label{lemma:technical-1}
    Let \(\AV/\FF_q\) be a vanilla abelian surface
    with Frobenius endomorphism~\(\pi\),
    and let \(\ell\) be an odd prime.
    \begin{enumerate}
        \item \label{lemma:item:ell-splits}
            If \(\ell\) splits in \(\ZZ[\pi + \dualof{\pi}]\)
            (or equivalently, if \(t^2 - 4s\) is a square in \(\FF_\ell\)),
            then
            \(
                \chi_{\pi}(T) 
                \equiv 
                (T^2 - uT + q)(T^2 - u'T + q)
                \pmod{\ell}
            \)
            for some \(u\) and \(u'\) in \(\ZZ/\ell\ZZ\).
        \item \label{lemma:item:ell-ramified}
            If \(\ell\) is ramified in \(\ZZ[\pi + \dualof{\pi}]\)
            (or equivalently, if \(\ell\) divides \(t^2 - 4s\)),
            then 
            \(
                \chi_{\pi}(T) 
                \equiv 
                (T^2 - uT + q)^2
                \pmod{\ell}
            \)
            where \(u = t/2\) in \(\ZZ/\ell\ZZ\).
    	\item  \label{lemma:item:ell-inert}
            If $\ell$ is inert in $\ZZ[\pi+\dualof{\pi}]$ 
            (or equivalently, if $t^2 - 4s$ is a square in $\FF_\ell$), 
            then 
            \(
                \chi_{\pi}(T)
                \not\equiv
                (T^2 - uT + q)(T^2 - u'T + q)\pmod{\ell}
            \) 
            for any $u,u'\in\ZZ/\ell\ZZ$.
    \end{enumerate}
\end{lemma}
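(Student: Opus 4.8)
The plan is to reduce all three statements to the splitting behaviour of the single quadratic \(g(Y) := Y^2 - tY + s\) over \(\FF_\ell\), whose discriminant is exactly \(t^2 - 4s\). The conceptual reason this quadratic governs everything is that \(\tau := \pi + \dualof{\pi}\) is a root of \(g\) (from \(\chi_{\pi+\dualof{\pi}}(X) = (X^2-tX+s)^2\)), and, since the Rosati involution is complex conjugation on the endomorphism algebra of the vanilla surface \(\AV\), the Weil relation gives \(\pi\dualof{\pi} = q\); thus \(\pi\) and \(\dualof{\pi}\) are the roots of \(X^2 - \tau X + q\) over \(\QQ(\pi + \dualof{\pi})\), and expanding
\[
    (X^2 - \tau X + q)(X^2 - \bar\tau X + q)
    =
    X^4 - tX^3 + (2q+s)X^2 - qtX + q^2
\]
recovers \(\chi_{\pi}\). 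Each factor \(T^2 - uT + q\) in the desired factorization should be thought of as the reduction of one of these two factors.

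First I would prove the decisive equivalence by pure coefficient comparison. Expanding \((T^2 - uT + q)(T^2 - u'T + q)\) and matching against \(\chi_{\pi} \bmod \ell\), one sees that
\[
    \chi_{\pi}(T) \equiv (T^2 - uT + q)(T^2 - u'T + q) \pmod{\ell}
\]
holds for some \(u, u' \in \ZZ/\ell\ZZ\) if and only if \(u + u' \equiv t\) and \(uu' \equiv s \pmod{\ell}\); the \(T^1\)- and \(T^0\)-coefficients are then automatically matched, since the constant term of \(\chi_{\pi}\) is \(q^2\) and its \(T^1\)-coefficient is \(-qt\). In other words, such a factorization exists precisely when \(u\) and \(u'\) are roots of \(g\) in \(\FF_\ell\), i.e. precisely when \(g\) splits into linear factors over \(\FF_\ell\).

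Finally I would read off the three cases from the discriminant. Since \(\ZZ[\pi + \dualof{\pi}] \cong \ZZ[Y]/(g)\), reduction mod \(\ell\) gives \(\ZZ[\pi+\dualof{\pi}]\otimes\FF_\ell \cong \FF_\ell[Y]/(g)\), so the splitting type of \(\ell\) in \(\ZZ[\pi+\dualof{\pi}]\) matches the factorization type of \(g\) over \(\FF_\ell\) (recall \(\ell\) is odd, so \(2\) is invertible). If \(t^2-4s\) is a nonzero square then \(g\) has distinct roots \(u \neq u'\), giving part~(\ref{lemma:item:ell-splits}); if \(\ell \mid t^2-4s\) then \(g\) has the double root \(u = u' = t/2\), giving part~(\ref{lemma:item:ell-ramified}) with \(u = t/2\); and if \(t^2-4s\) is a non-square then \(g\) has no root in \(\FF_\ell\), so by the equivalence above \emph{no} factorization of the stated form can exist, giving part~(\ref{lemma:item:ell-inert}).

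There is no serious obstacle here; the whole lemma rests on the factorization identity and the elementary theory of \(g\). The only points requiring care are the Weil relation \(\pi\dualof{\pi} = q\) (this is where the vanilla hypothesis enters, via Rosati \(=\) complex conjugation) and, for part~(\ref{lemma:item:ell-inert}), the need to exclude \emph{every} factorization of the given shape rather than just the one coming from the geometric splitting --- which is exactly what the \emph{if and only if} of the coefficient comparison delivers.
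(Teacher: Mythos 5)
Your proof is correct and complete. The paper itself does not argue the lemma at all: its ``proof'' is a one-line citation to \cite[Chap.~1, Prop.~25]{Lan}, so your write-up is a genuinely different (and self-contained) route. The key observation --- that matching coefficients in \((T^2-uT+q)(T^2-u'T+q) = T^4-(u+u')T^3+(2q+uu')T^2-q(u+u')T+q^2\) against \(\chi_\pi(T)=T^4-tT^3+(2q+s)T^2-qtT+q^2\) forces exactly \(u+u'\equiv t\) and \(uu'\equiv s\pmod{\ell}\), with the \(T^1\)- and \(T^0\)-coefficients matching automatically --- reduces all three cases to the factorization of \(Y^2-tY+s\) over \(\FF_\ell\), and the identification \(\ZZ[\pi+\dualof{\pi}]\otimes\FF_\ell\cong\FF_\ell[Y]/(Y^2-tY+s)\) (valid because \(\AV\) vanilla forces \(\QQ(\pi+\dualof{\pi})\) to be a real quadratic field, so \(Y^2-tY+s\) is the minimal polynomial of \(\pi+\dualof{\pi}\)) translates split/ramified/inert into nonzero square/zero/non-square discriminant, with \(u=u'=t/2\) in the ramified case since \(\ell\) is odd. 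The ``if and only if'' of the coefficient comparison is precisely what rules out \emph{every} factorization in the inert case, which is the one direction a purely constructive argument would miss. What your approach buys is transparency and independence from the reference; what the citation buys the authors is brevity. One editorial note: as printed, parts (1) and (3) of the statement both say ``\(t^2-4s\) is a square in \(\FF_\ell\)''; your reading (nonzero square for the split case, non-square for the inert case) is the correct one and silently repairs this typo.
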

\begin{proof}
    This is a direct consequence of \cite[Chap.\ 1: Prop.\ 25]{Lan}.
\end{proof}

\begin{lemma}
    \label{lemma:technical-2}
    Let \((\AV,\xi,\iota)\) be a triple
    describing a vanilla abelian surface over~\(\FF_q\)
    with real multiplication by $\mathcal{O}_F$,
    and let \(\mu\) be a totally positive element of \(\mathcal{O}_F\)
    of prime norm \(\mu\overline{\mu}=\ell\).
    The restriction of the Frobenius endomorphism \(\pi\)
    to \(\AV[\mu]\)
    has characteristic polynomial
    \[
        \chi_{\pi,\mu}(T)
        \equiv T^2 - uT + q
        \pmod{\ell}
        \quad
        \text{ for some } u \in \ZZ/\ell\ZZ
        \ .
    \]
\end{lemma}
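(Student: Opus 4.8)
The only genuine content of the statement is that the constant term of $\chi_{\pi,\mu}$ is $q$ modulo $\ell$. Indeed, $\AV[\mu]$ is a two-dimensional $\FF_\ell$-vector space that $\pi$ preserves (the restriction makes sense precisely because $\AV$ is vanilla, so $\pi$ commutes with $\iota(\mu)$), and hence $\chi_{\pi,\mu}$ is automatically monic of degree $2$, say $\chi_{\pi,\mu}(T)=T^2-uT+v$ with $u=\mathrm{tr}(\pi|_{\AV[\mu]})$ and $v=\det(\pi|_{\AV[\mu]})$. The plan is therefore to show $v\equiv q\pmod\ell$ by means of the Weil pairing attached to the principal polarization $\xi$, crucially exploiting that $\iota$ is stable under the Rosati involution. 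Throughout I work in the case $(\mu)\neq(\overline{\mu})$, so that the decomposition $\AV[\ell]=\AV[\mu]\oplus\AV[\overline{\mu}]$ of the preceding discussion is available.

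First I would introduce the perfect alternating Weil pairing $e_\ell\colon\AV[\ell]\times\AV[\ell]\to\FFbar_q^{\times}$ induced by $\xi$ (valued in the $\ell$-th roots of unity), together with its adjunction property $e_\ell(\phi x,y)=e_\ell(x,\dualof{\phi}y)$ for $\phi\in\End(\AV)$. Since $\iota$ is Rosati-stable we have $\dualof{\iota(\mu)}=\iota(\mu)$, so for $x\in\AV[\mu]$ and $y\in\AV[\overline{\mu}]$ we obtain $1=e_\ell(\iota(\mu)x,y)=e_\ell(x,\iota(\mu)y)$. On $\AV[\overline{\mu}]$ the order $\mathcal{O}_F$ acts through the field $\mathcal{O}_F/(\overline{\mu})\cong\FF_\ell$, in which $\mu$ is a unit, so $\iota(\mu)$ acts invertibly there and $\iota(\mu)y$ runs over all of $\AV[\overline{\mu}]$; hence $\AV[\mu]$ and $\AV[\overline{\mu}]$ are orthogonal under $e_\ell$. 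Because $e_\ell$ is perfect on $\AV[\ell]=\AV[\mu]\oplus\AV[\overline{\mu}]$ and these complementary summands are orthogonal, $e_\ell$ must restrict to a perfect alternating—hence symplectic—pairing on the two-dimensional space $\AV[\mu]$.

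The final step is to track how $\pi$ scales this restricted pairing. As $\AV$ is vanilla the Rosati involution is complex conjugation on the CM field $\End(\AV)\otimes\QQ$, where $\pi\,\dualof{\pi}=q$; combined with adjunction this gives $e_\ell(\pi x,\pi y)=e_\ell(x,\dualof{\pi}\pi y)=e_\ell(x,[q]y)=e_\ell(x,y)^{q}$. For any endomorphism $\phi$ of a two-dimensional symplectic space one has $e_\ell(\phi x,\phi y)=e_\ell(x,y)^{\det(\phi)}$, so applying this to $\phi=\pi|_{\AV[\mu]}$ forces $\det(\pi|_{\AV[\mu]})\equiv q\pmod\ell$, i.e. $v\equiv q$, and $\chi_{\pi,\mu}(T)\equiv T^2-uT+q$ as claimed. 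I expect the main obstacle to be the non-degeneracy step: the whole argument hinges on $\AV[\mu]$ and $\AV[\overline{\mu}]$ being orthogonal complements for the polarized Weil pairing, and this is exactly where the compatibility of the real multiplication with the polarization (the Rosati-stability of $\iota$) is indispensable—without it the restricted pairing could degenerate and the determinant would not be pinned down to $q$.
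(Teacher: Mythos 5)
Your argument for the split case $(\mu)\neq(\bar\mu)$ is correct, and it is a genuinely different route from the paper's: the paper instead invokes Lemma~\ref{lemma:technical-1} together with the fact that $\pi+\dualof{\pi}$ lies in $\iota(\mathcal{O}_F)$ and hence acts as a \emph{scalar} on $\AV[\mu]=\ker\iota(\mu)$ (since $\mathcal{O}_F$ acts there through $\mathcal{O}_F/(\mu)\cong\FF_\ell$), which combined with $\pi\dualof{\pi}=q$ pins down the characteristic polynomial. Your Weil-pairing computation of $\det(\pi|_{\AV[\mu]})$ is a clean alternative in that case, and the orthogonality of $\AV[\mu]$ and $\AV[\bar\mu]$ via Rosati-stability is exactly right.

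However, there is a genuine gap: the lemma does not exclude the ramified case $(\mu)=(\bar\mu)$, i.e.\ $(\ell)=(\mu)^2$, and the paper needs this case (for $F=\QQ(\sqrt{5})$ the element $\mu=(5+\sqrt{5})/2$ of norm $5$ used in \S\ref{sec:experiments} satisfies $(\mu)=(\bar\mu)$). Your restriction to $(\mu)\neq(\bar\mu)$ is not a removable convenience, because the mechanism itself breaks down when $\ell$ ramifies: there $\AV[\ell]$ is free of rank $2$ over $\mathcal{O}_F/(\ell)\cong\FF_\ell[\epsilon]/(\epsilon^2)$, so $\AV[\mu]=\ker\iota(\mu)=\iota(\mu)\AV[\ell]$, and for $x=\iota(\mu)x'$, $y=\iota(\mu)y'$ the adjunction property gives $e_\ell(x,y)=e_\ell(x',\iota(\mu)^2y')=e_\ell(x',\iota(\ell u)y')=1$. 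Thus the restricted pairing on $\AV[\mu]$ is \emph{identically trivial} (it is a maximal isotropic subspace of $\AV[\ell]$), and the determinant of $\pi|_{\AV[\mu]}$ is not constrained by it; the best one gets from the full pairing is $\det(\pi|_{\AV[\mu]})^2=q^2$, which leaves a sign ambiguity. To cover the ramified case you would need a different input, e.g.\ the paper's observation that $\pi+\dualof{\pi}$ acts as a scalar $u\in\FF_\ell$ on $\AV[\mu]$ so that $\pi|_{\AV[\mu]}$ satisfies $T^2-uT+q$, or an $\mathcal{O}_F/(\ell)$-bilinear refinement of the Weil pairing whose reduction modulo $(\mu)$ stays perfect.
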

\begin{proof}
    By definition, \(\ell = \mu\bar\mu\) splits in \(\mathcal{O}_F\),
    so it either splits or ramifies 
    in the suborder \(\ZZ[\pi+\dualof{\pi}] \subseteq \mathcal{O}_F\);
    we are therefore in Case \eqref{lemma:item:ell-splits} or
    \eqref{lemma:item:ell-ramified} 
    of Lemma~\ref{lemma:technical-1}.
    In particular,
    both \(\pi\) and \(\dualof{\pi}\)
    restrict to endomorphisms of \(\AV[\mu]\),
    and they have the same eigenvalues \(\lambda\) and \(q/\lambda\);
    so the characteristic polynomial of \(\pi\)
    is \(T^2 - (\lambda + q/\lambda)T + q\).
    The result follows with \(u = \lambda + q/\lambda\).
\end{proof}

Proposition~\ref{prop:factorization}
uses the factorization of the modular polynomial \(G_\mu\),
specialized at the RM invariants of \(\AV\),
to derive information \(\chi_{\pi,\mu}(T)\pmod{\ell}\).

\begin{proposition}  \label{prop:factorization}
    Let \((\AV,\xi,\iota)\) be a triple
    describing a vanilla abelian surface over~\(\FF_q\)
    with real multiplication by $\mathcal{O}_F$ 
    and with RM invariants \((J_1,J_2,J_3)\),
    and let \(\mu\) be a totally positive element of \(\mathcal{O}_F\)
    of prime norm \(\mu\overline{\mu}=\ell\).
    Let \(\pi\) be the Frobenius endomorphism of \(\AV\),
    with \(\chi_{\pi,\mu}(T) = T^2 - uT + q\) the characteristic polynomial 
    of the restriction of \(\pi\) to \(\AV[\mu]\),
    and let \(e\) be the order of \(\pi\) 
    in \(\Aut(\PP(\AV[\mu])) \cong \PGL_2(\FF_\ell)\). 

    The polynomial
    \(
        G_\mu(J_1,J_2,J_3,x)
    \) 
    has degree \(\ell+1\) in \(\FF_q[x]\),
    and its factorization type is as follows:
    \begin{enumerate}
        \item \label{prop:item:non-square} 
            If \(u^2 - 4q\) is not a square in \(\FF_\ell\),
            then \(e > 1\) and 
            the factorization type is
            \[
                (e,\ldots,e)
                \quad
                \text{where}
                \quad
                e \mid \ell+1
                \ .
            \]
        \item \label{prop:item:square} 
            If \(u^2 - 4q\) is a nonzero square in \(\FF_\ell\),
            then
            the factorization type is 
            \[
            	(1,1,e,\ldots,e)
                \quad
                \text{where}
                \quad
                e\mid \ell-1
                \ .
            \]
        \item \label{prop:item:zero} 
            If \(u^2 - 4q = 0\) in \(\FF_\ell\),
            then the factorization type is
            \[
            	(1,e)
                \quad
                \text{where}
                \quad
                e = \ell
                \ .
            \]
    \end{enumerate}
\end{proposition}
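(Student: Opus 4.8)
The plan is to reduce the statement to a purely group-theoretic analysis of how the Frobenius matrix $\pi$ acts on $\PP(\AV[\mu]) \cong \PP^1(\FF_\ell)$, and then translate that action, via Proposition~\ref{prop:61}, into a factorization type for $G_\mu(J_1,J_2,J_3,x)$. The key observation is that Lemma~\ref{lemma:technical-2} tells us $\pi$ restricts to a $2\times2$ matrix over $\FF_\ell$ with characteristic polynomial $\chi_{\pi,\mu}(T) = T^2 - uT + q$, so the entire problem is governed by the discriminant $u^2 - 4q$ of this quadratic. This is exactly the discriminant that sorts elliptic primes into Elkies, Atkin, and volcanic types, so I expect the three cases of the proposition to mirror precisely the three cases of the elliptic story in~\S\ref{sec:prime-types}.

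\emph{First} I would establish the degree claim: since $\mu$ has prime norm $\ell$ with $(\mu) \neq (\bar\mu)$, the subspace $\AV[\mu]$ is a two-dimensional $\FF_\ell$-vector space, so $\PP(\AV[\mu]) \cong \PP^1(\FF_\ell)$ has exactly $\ell+1$ points; each point is the kernel of a $\mu$-isogeny and corresponds bijectively to a root of $G_\mu(J_1,J_2,J_3,x)$ by the discussion following the definition of the Hilbert modular polynomials, giving $\deg G_\mu = \ell+1$. \emph{Next}, the crucial bridge is Proposition~\ref{prop:61}\eqref{prop:item:zero-Gmu}, which says that a root lies in $\FF_{q^e}$ if and only if the corresponding $1$-dimensional subspace is a $\pi^e$-eigenspace. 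Hence the degree of the irreducible factor through which a given root passes equals the size of the $\langle\pi\rangle$-orbit of the corresponding point in $\PP^1(\FF_\ell)$; so the factorization type of $G_\mu$ is exactly the cycle type of the image of $\pi$ acting on $\PP^1(\FF_\ell)$.

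\emph{Then} I would carry out the standard $\PGL_2(\FF_\ell)$ orbit analysis, splitting on $u^2 - 4q$. If $u^2 - 4q$ is a nonzero square, $\pi$ is diagonalizable over $\FF_\ell$ with distinct eigenvalues; its two eigenlines are fixed points in $\PP^1(\FF_\ell)$, contributing two linear factors, and on the remaining $\ell-1$ points it acts with order $e \mid \ell-1$ (the order of the ratio of eigenvalues), giving type $(1,1,e,\dots,e)$. If $u^2 - 4q$ is a nonsquare, the eigenvalues lie in $\FF_{\ell^2}\setminus\FF_\ell$ and are conjugate; $\pi$ has no fixed point in $\PP^1(\FF_\ell)$ and acts with uniform orbit size $e \mid \ell+1$, giving type $(e,\dots,e)$. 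If $u^2 - 4q = 0$, the matrix is a single Jordan block (a transvection, since $\AV$ is vanilla rules out the scalar case), fixing exactly one point and moving the other $\ell$ points in a single orbit of size $e = \ell$, giving type $(1,e)$. The claim $e>1$ in the first two cases follows because a vanilla $\AV$ forbids $\pi$ from acting as a scalar on $\AV[\mu]$, which would force roots of unity into the endomorphism algebra. I would invoke Proposition~\ref{prop:61}\eqref{prop:item:split-Gmu} to confirm that $e$ is precisely the splitting degree and that these orbit sizes do assemble into the stated types.

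\emph{The main obstacle} I anticipate is not the matrix combinatorics, which is classical, but rather justifying cleanly that the $\langle\pi\rangle$-orbit structure on $\PP(\AV[\mu])$ transfers faithfully to the factorization of $G_\mu$ over $\FF_q$ — in particular that distinct points in the same orbit genuinely merge into a single irreducible factor of the stated degree, with no coincidental extra rationality. This requires that the correspondence between roots of $G_\mu$ and $\mu$-isogeny kernels be Galois-equivariant, which is exactly the content secured by the vanilla hypothesis in the converse direction of Proposition~\ref{prop:61}\eqref{prop:item:zero-Gmu} (the argument there ruling out spurious $\FF_{q^e}$-rationality via the root-of-unity obstruction). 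I would therefore lean on that proposition as a black box and keep the orbit-counting self-contained, treating the three discriminant cases in parallel.
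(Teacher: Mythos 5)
Your overall strategy is the same as the paper's: Lemma~\ref{lemma:technical-2} puts $\pi|_{\AV[\mu]}$ in $\GL_2(\FF_\ell)$ with characteristic polynomial $T^2-uT+q$, Proposition~\ref{prop:61} converts $\langle\pi\rangle$-orbits on $\PP(\AV[\mu])\cong\PP^1(\FF_\ell)$ into irreducible factors of $G_\mu(J_1,J_2,J_3,x)$, and the three cases are the standard eigenvalue trichotomy governed by the discriminant $u^2-4q$. That much matches the paper's own proof almost exactly.

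There is, however, one step that fails. In case~\eqref{prop:item:zero} you dispose of the possibility that $\pi$ acts as a scalar on $\AV[\mu]$ by asserting that the vanilla hypothesis ``rules out the scalar case'' because it would force roots of unity into the endomorphism algebra. This is not correct. That $\pi$ acts as a scalar $\lambda$ on the finite group $\AV[\mu]$ is a congruence condition on $\pi$ modulo (a power of) $\mu$; it does not produce any new element of $\End_{\FFbar_q}(\AV)\otimes\QQ$, let alone a root of unity. It is the exact analogue of an elliptic curve sitting at the top of an $\ell$-volcano, where $\Phi_\ell(j(\EC),x)$ splits into $\ell+1$ linear factors as in Equation~\eqref{eq:EC-upper-volcanic-factorization}, and it genuinely occurs for vanilla surfaces: Definition~\ref{def:RM-types} explicitly admits the type $(1,\ldots,1)$ for $\mathcal{O}_F$-volcanic $\mu$, and Tables~\ref{tab:facto-patterns-5} and~\ref{tab:facto-patterns-11} record it happening in the experiments. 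So in case~\eqref{prop:item:zero} the correct dichotomy is: either $\pi|_{\AV[\mu]}$ is a nontrivial Jordan block, giving type $(1,\ell)$, or it is scalar, giving type $(1,\ldots,1)$; the second possibility cannot be argued away. (The paper's own proof quietly restricts to the non-diagonalizable subcase, so the statement of case~\eqref{prop:item:zero} is itself slightly incomplete --- but the fix is to add the scalar subcase, not to eliminate it.) A smaller quibble in the same vein: in cases~\eqref{prop:item:non-square} and~\eqref{prop:item:square} you also attribute $e>1$ to the vanilla hypothesis, whereas it already follows from $u^2-4q\neq 0$: the eigenvalues are distinct, so their ratio is nontrivial and the image of $\pi$ in $\PGL_2(\FF_\ell)$ has order greater than $1$. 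Vanilla is needed where you use it as a black box --- inside Proposition~\ref{prop:61} --- not in the orbit count.
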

\begin{proof}
    By Lemma~\ref{lemma:technical-1},
    the endomorphism $\pi$ acts on $\AV[\mu]$ 
    as a $2\times2$ matrix in \(\GL_2(\FF_\ell)\)
    with characteristic polynomial \(T^2-uT+q=0\).
    If the matrix has two conjugate eigenvalues $\lambda_1$, $\lambda_2$
    in \(\FF_{\ell^2}\),
    then we are in Case \eqref{prop:item:non-square}: 
    there are no 1-dimensional eigenspaces 
    of \(\pi\) in \(\AV[\mu]\),
    and all irreducible factors of $G_{\mu}(J_1,J_2,J_3,x)$ have degree~$e$,
    where $e$ is the smallest exponent such that $\lambda_{i}^{e}$ 
    is in $\mathbb{F}_{\ell}$.
    
    If the matrix has two eigenvalues in $\mathbb{F}_{\ell}$ 
    and is diagonalizable, then the discriminant $t^2- 4s$ 
    is a square modulo $\ell$:
    we are in Case \eqref{prop:item:square}. 
    This time $\AV[\mu]$ 
    is the direct product of two 1-dimensional eigenspaces,
    which account for
    two linear factors of $G_{\mu}(J_1,J_2,J_3,x)$. 
    The remaining factors have degree~$e$, 
    where $e$ is the smallest positive integer such that 
    $\pi^e$ acts as a scalar matrix. 
    
    If the matrix has a double eigenvalue and is not diagonalizable,
    then we are in Case \eqref{prop:item:zero}: 
    there is only one 1-dimensional eigenspace, and the matrix
    of $\pi^\ell$ is scalar. 
\end{proof}

\subsection{The characteristic polynomial of Frobenius}

Now that we can compute the order of Frobenius,
we want to use this to derive information on the characteristic
polynomial.
Proposition~\ref{prop:AVmu-trace-relation}
generalizes Proposition~\ref{prop:EC-trace-relation}
to genus 2.

\begin{proposition}
    \label{prop:AVmu-trace-relation}
    Let \((\AV/\FF_q,\xi,\iota)\)
    be a triple describing a vanilla abelian surface
    with real multiplication by \(\mathcal{O}_F\),
    and let \(\mu\) be a totally positive element
    of prime norm \(\ell = \mu\bar\mu \not\in \{2,p\}\).
    Let \(\pi\) be the Frobenius endomorphism of \(\AV\),
    and \(\chi_{\pi,\mu}(T) = T^2 - uT + q\)
    the characteristic polynomial of its restriction to \(\AV[\mu]\).
    If \(e\) is the order of the image of \(\pi\) 
    in \(\Aut(\PP(\AV[\mu])) \cong \PGL_2(\FF_\ell)\),
    then 
    \[
        u^2 = \eta_{e}q 
        \quad 
        \text{in } 
        \FF_\ell 
        \ ,
    \]
    where 
    \(
        \eta_{e} 
        = 
        \begin{cases}
            \zeta + \zeta^{-1} + 2
            \text{ with } \zeta \in \FF_{\ell^2}^\times \text{ of order } e
            & 
            \text{if } \gcd(\ell,e) = 1
            \ ,
            \\
            4 & \text{otherwise} \ .
        \end{cases}
    \)
\end{proposition}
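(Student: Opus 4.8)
The plan is to recognize this statement as the precise genus-2 counterpart of Proposition~\ref{prop:EC-trace-relation} and to carry its proof over almost word for word; the bridge between the two settings is Lemma~\ref{lemma:technical-2}, which supplies the characteristic polynomial \(T^2 - uT + q\) of \(\pi\) restricted to \(\AV[\mu]\). This is formally the same data as the elliptic relations \(\lambda_1 + \lambda_2 = t\), \(\lambda_1\lambda_2 = q\), with \(u\) now playing the role of the trace.

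First I would pin down the ambient linear algebra. Since \(\iota(\mu)\) has degree \(\ell^2\), its kernel \(\AV[\mu]\) has order \(\ell^2\); as \(\ell \neq p\) the endomorphism \(\iota(\mu)\) is separable, so \(\AV[\mu]\) is \'etale and sits inside \(\AV[\ell] \cong (\FF_\ell)^4\) as a \(2\)-dimensional \(\FF_\ell\)-vector space. Because \(\AV\) is vanilla, \(\pi\) commutes with \(\iota(\mu)\) and hence stabilizes \(\AV[\mu]\), acting there as an element of \(\GL_2(\FF_\ell)\) whose image in \(\PGL_2(\FF_\ell)\) has order \(e\). By Lemma~\ref{lemma:technical-2} its two eigenvalues \(\lambda_1, \lambda_2 \in \FF_{\ell^2}\) satisfy \(\lambda_1 + \lambda_2 = u\) and \(\lambda_1\lambda_2 = q\) in \(\FF_\ell\).

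The remainder of the argument then mirrors Proposition~\ref{prop:EC-trace-relation} exactly, and I would split on whether the eigenvalues coincide. If \(\lambda_1 = \lambda_2\), then \(u = 2\lambda_1\) and \(q = \lambda_1^2\) immediately give \(u^2 = 4q\), matching \(\eta_e = 4\); here \(\pi|_{\AV[\mu]}\) is either scalar or a single Jordan block, so \(e \in \{1, \ell\}\), and both possibilities are consistent with the stated formula. If \(\lambda_1 \neq \lambda_2\), then \(e > 1\) is the least integer with \((\lambda_2/\lambda_1)^e = 1\); since \(\lambda_2/\lambda_1 \in \FF_{\ell^2}^\times\) has order dividing \(\ell^2 - 1\), we get \(\gcd(e,\ell) = 1\) and \(\zeta := \lambda_2/\lambda_1\) is a primitive \(e\)-th root of unity. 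Writing \(\lambda_2 = \lambda_1\zeta\) yields \(q = \lambda_1^2\zeta\), and substituting \(\lambda_1^2 = q\zeta^{-1}\) into \(u^2 = \lambda_1^2(1+\zeta)^2\) gives
\[
    u^2 = q\zeta^{-1}(\zeta^2 + 2\zeta + 1) = (\zeta + \zeta^{-1} + 2)q,
\]
as required.

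I expect no genuine obstacle here: the content lies entirely in the reduction step, namely in justifying that \(\pi\) restricts to a \(\GL_2(\FF_\ell)\)-action on a \(2\)-dimensional space with characteristic polynomial \(T^2 - uT + q\). That reduction rests on the vanilla hypothesis (for the commutativity of \(\pi\) with \(\iota(\mu)\)) and on \(\ell \neq p\) (for separability), and it is exactly what Lemma~\ref{lemma:technical-2} and its proof deliver. Once it is in place, the case analysis and the single algebraic identity above finish the proof.
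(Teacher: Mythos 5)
Your proposal is correct and follows exactly the route the paper takes: the paper's proof of this proposition is literally the single sentence ``The proof is identical to that of Proposition~\ref{prop:EC-trace-relation},'' and your argument spells out that identification, with Lemma~\ref{lemma:technical-2} supplying the characteristic polynomial \(T^2 - uT + q\) on the two-dimensional space \(\AV[\mu]\) and the same eigenvalue computation finishing the job. The extra care you take over why \(\AV[\mu]\) is a two-dimensional \(\FF_\ell\)-vector space stabilized by \(\pi\), and the check that the \(e\in\{1,\ell\}\) subcases are consistent with \(\eta_e=4\), are welcome but not a departure from the paper.
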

\begin{proof}
    The proof is identical to that of
    Proposition~\ref{prop:EC-trace-relation}.
\end{proof}

Coming back to point counting:
suppose we have a Jacobian \(\JC\)
with real multiplication by \(\mathcal{O}_F\);
we want to compute the characteristic polynomial
\[
    \chi_{\pi}(T) = T^4 - tT^3 + (2q + s)T^2 - tqT + q^2
    \ .
\]
If we have a totally positive element \(\mu\) in \(\mathcal{O}_F\)
such that \(\mu\bar\mu = \ell\),
then we know that 
\(\chi_{\pi}(T)\pmod{\ell}\) splits into two quadratic factors:
\[
    \chi_\pi(T) 
    \equiv
    {\chi_{\pi,\mu}(T)}
    {\chi_{\pi,\bar\mu}(T)}
    \equiv
    (T^2 - uT + q)
    (T^2 - u'T + q)
    \pmod{\ell}
    \ ,
\]
so
\begin{equation}
    \label{eq:t-s-u}
    t \equiv u + u' \pmod{\ell}
    \qquad
    \text{and}
    \qquad
    s \equiv uu' - 2q
    \pmod{\ell}
    \ .
\end{equation}

Given precomputed Hilbert modular polynomials \(G_\mu\) and \(G_{\bar\mu}\),
then,
we can specialize them at the RM invariants of \(\JC\)
and factor to determine the order of Frobenius on \(\JC[\mu]\) and
on \(\JC[\bar\mu]\) using Proposition~\ref{prop:factorization}.
We can then apply Proposition~\ref{prop:AVmu-trace-relation} and
Equations~\eqref{eq:t-s-u} to restrict the possible
values of \(s\) and \(t\) modulo \(\ell\).

The question of how best to exploit this extra modular information
remains open.  
Atkin's match-and-sort and 
Joux and Lercier's Chinese-and-match algorithms
for elliptic curves cannot be re-used
directly here, because they were designed to solve the
one-dimensional problem of determining the elliptic trace,
while here we have the two-dimensional problem of determining \((s,t)\).

\subsection{Prime types for real multiplication by \texorpdfstring{\(\mathcal{O}_F\)}{OF}}

The factorization patterns in Proposition~\ref{prop:factorization}
are the same as those we saw for specialized elliptic modular polynomials
in~\S\ref{sec:prime-types}.
This leads us to define an analogous classification of prime types,
for totally positive elements in \(\mathcal{O}_F\) of prime norm.

\begin{definition}
    \label{def:RM-types}
    Let \(\mu\) be a totally positive element of \(\mathcal{O}_F\)
    such that \(\mu\bar\mu = (\ell)\) for some prime \(\ell \not= 2, p\).
    We say that
    \begin{itemize}
        \item
            \(\mu\) is {\bf \(\mathcal{O}_F\)-Elkies} 
            for a vanilla triple \((\AV,\xi,\iota)\) 
            with RM invariants \((J_1,J_2,J_3)\)
            if the factorization type of \(G_\mu(J_1,J_2,J_3,x)\)
            is \((1,1,e,\ldots,e)\) with \(e > 1\);
        \item
            \(\mu\) is {\bf \(\mathcal{O}_F\)-Atkin} 
            for a vanilla triple \((\AV,\xi,\iota)\) 
            with RM invariants \((J_1,J_2,J_3)\)
            if the factorization type of \(G_\mu(J_1,J_2,J_3,x)\)
            is \((e,\ldots,e)\) with \(e > 1\);
            and
        \item
            \(\mu\) is {\bf \(\mathcal{O}_F\)-volcanic} 
            for a vanilla triple \((\AV,\xi,\iota)\) 
            with RM invariants \((J_1,J_2,J_3)\)
            if the factorization type of \(G_\mu(J_1,J_2,J_3,x)\)
            is \((1,e)\) or \((1,\ldots,1)\).
    \end{itemize}
\end{definition}

If \(K \cong \End_{\FFbar_q}(\AV)\otimes\QQ\) is Galois then 
the type of \(\mu\) completely determines the type of \(\bar\mu\) 
(and vice versa). 
For general \(K\), however, this does not hold:
the type of \(\bar\mu\) is not determined by the type of \(\mu\).

\subsection{The parity of the number of factors of \texorpdfstring{\(G_\mu\)}{Gmu}}

The following proposition is 
the genus-2 real multiplication analogue 
of Equation~\eqref{eq:EC-number-of-factors-of-Phi_ell} 
(cf.~\cite[Prop.~6.3]{Schoof95}).

\begin{proposition}
    Let \((\AV,\xi,\iota)\) be a triple
    describing a vanilla abelian surface over~\(\FF_q\)
    with real multiplication by \(\mathcal{O}_F\),
    and with RM invariants \((J_1,J_2,J_3)\).
    Let \(\mu\) be a totally positive element of \(\mathcal{O}_F\)
    of prime norm \(\mu\overline{\mu}=\ell\),
    let \(\chi_{\pi,\mu}(T) = T^2 - uT + q\)
    be the characteristic polynomial of \(\pi\)
    restricted to \(\AV[\mu]\),
    and let $r$ denote the number of irreducible factors 
    in the factorization of \(G_\mu(J_1,J_2,J_3,x)\).
    Then 
    \[
        (-1)^r = \left(\frac{q}{\ell}\right)
        \ .
    \] 
\end{proposition}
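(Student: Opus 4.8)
The plan is to read off the parity of $r$ from the sign of the permutation that Frobenius induces on the roots of $G_\mu(J_1,J_2,J_3,x)$, and then to identify that sign with $\left(\frac{q}{\ell}\right)$. First I would set up the dictionary between factors and orbits. By Proposition~\ref{prop:61}\eqref{prop:item:zero-Gmu} the roots of $G_\mu(J_1,J_2,J_3,x)$ are in Frobenius-equivariant bijection with the points of $\PP(\AV[\mu]) \cong \PP^1(\FF_\ell)$: each root corresponds to a one-dimensional subspace of $\AV[\mu]$ (the kernel of the associated $\mu$-isogeny), and a root lies in $\FF_{q^e}$ exactly when that subspace is an eigenspace of $\pi^e$. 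In particular $G_\mu(J_1,J_2,J_3,x)$ is separable of degree $\ell+1$ (consistent with Proposition~\ref{prop:factorization}), and its irreducible factors over $\FF_q$ are precisely the $\langle\pi\rangle$-orbits on $\PP^1(\FF_\ell)$. Thus $r$ equals the number of cycles of the permutation $\sigma$ that $\pi$ induces on these $\ell+1$ points.

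Next I would convert the cycle count into a sign. A permutation of an $n$-element set with $c$ cycles has sign $(-1)^{n-c}$; here $n = \ell+1$ is even (since $\ell$ is odd) and $c = r$, so $\mathrm{sgn}(\sigma) = (-1)^{(\ell+1)-r} = (-1)^r$. It therefore suffices to show that $\mathrm{sgn}(\sigma) = \left(\frac{q}{\ell}\right)$.

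The heart of the matter --- and the step I expect to be the main obstacle --- is the classical identity that an element $M \in \GL_2(\FF_\ell)$ acts on $\PP^1(\FF_\ell)$ by a permutation of sign $\left(\frac{\det M}{\ell}\right)$. Granting this, the restriction $\pi|_{\AV[\mu]}$ has characteristic polynomial $T^2 - uT + q$ and hence determinant $q$ in $\FF_\ell$, so $\mathrm{sgn}(\sigma) = \left(\frac{q}{\ell}\right)$ and the proof is complete. To establish the identity I would note that $M \mapsto \mathrm{sgn}(M|_{\PP^1(\FF_\ell)})$ is a homomorphism $\GL_2(\FF_\ell) \to \{\pm1\}$; for $\ell \geq 5$ the group $\SL_2(\FF_\ell)$ is perfect, so it lies in the kernel and the homomorphism factors through $\det\colon \GL_2(\FF_\ell) \to \FF_\ell^\times$, forcing it to be trivial or the Legendre symbol. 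The diagonal matrix $\mathrm{diag}(g,1)$ with $g$ a quadratic non-residue acts as one $(\ell-1)$-cycle together with two fixed points, hence oddly, which rules out the trivial case and pins the sign down to $\left(\frac{\det M}{\ell}\right)$. The excluded prime $\ell = 3$ I would treat by hand through the isomorphism $\PGL_2(\FF_3) \cong S_4$, under which $\mathrm{PSL}_2(\FF_3) \cong A_4$ is the even part and $\PP^1(\FF_3)$ carries the tautological four-point action.

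As an independent check --- and an alternative route that sidesteps the group theory --- I would verify the formula directly against the three cases of Proposition~\ref{prop:factorization}. Writing the eigenvalues of $\pi$ on $\AV[\mu]$ as $\lambda_1,\lambda_2$ with $\lambda_1\lambda_2 = q$, one gets $r = 2 + (\ell-1)/e$ in the split case, where $\left(\frac{q}{\ell}\right) = \left(\frac{\lambda_1/\lambda_2}{\ell}\right) = (-1)^{(\ell-1)/e}$; $r = (\ell+1)/e$ in the inert case; and $r \in \{2,\ell+1\}$ with $q = (u/2)^2$ a nonzero square in the ramified case. In each case $(-1)^r = \left(\frac{q}{\ell}\right)$ drops out after an elementary order computation in $\FF_{\ell^2}^\times$.
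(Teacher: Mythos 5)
Your argument is correct, but it reaches the conclusion by a genuinely different route than the paper. The paper's proof dispatches the ramified case separately and then works inside a maximal torus $\mathcal{T}\subseteq\GL_2(\FF_\ell)$ containing $\pi|_{\AV[\mu]}$ (split or non-split according to Cases~\eqref{prop:item:square} and~\eqref{prop:item:non-square} of Proposition~\ref{prop:factorization}): it observes that $r$ has the same parity as the index $[\overline{\mathcal{T}}:\subgrp{\pi}]$ in the image $\overline{\mathcal{T}}\subset\PGL_2(\FF_\ell)$, and reads that parity off from the isomorphism $\overline{\mathcal{T}}/\subgrp{\overline{\mathcal{T}}^2,\pi}\to\FF_\ell^\times/\subgrp{(\FF_\ell^\times)^2,q}$ induced by the determinant. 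You instead encode the parity of $r$ in the sign of the permutation that $\pi$ induces on the $\ell+1$ points of $\PP(\AV[\mu])$ and invoke the Zolotarev-type identity $\mathrm{sgn}\bigl(M|_{\PP^1(\FF_\ell)}\bigr)=\left(\frac{\det M}{\ell}\right)$, proved via perfectness of $\SL_2(\FF_\ell)$. Your route treats all three factorization types uniformly (no separate ramified case) and isolates the group-theoretic content in a single clean, reusable identity, at the cost of a special argument for $\ell=3$; the paper's torus computation stays closer to the explicit formulas $r=(\ell+1)/e$ and $r=2+(\ell-1)/e$ and needs no low-$\ell$ exception. Both arguments rest on the same unstated input, namely that the irreducible factors of $G_\mu(J_1,J_2,J_3,x)$ correspond bijectively and Galois-equivariantly to the $\pi$-orbits on $\PP(\AV[\mu])$, which Proposition~\ref{prop:factorization} already uses. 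One small slip to fix: $\mathrm{diag}(g,1)$ with $g$ a non-residue acts on $\PP^1(\FF_\ell)\setminus\{0,\infty\}$ as a single $(\ell-1)$-cycle only when $g$ is a primitive root; either take $g$ primitive (every primitive root is a non-residue), or note that a general non-residue of order $d$ yields $(\ell-1)/d$ cycles with $d$ even and $(\ell-1)/d$ odd, so the permutation is odd in any case and your conclusion stands.
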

\begin{proof}
    If $\ell$ divides $u^2-4q$ 
    and $\pi$ has order $\ell$ 
    in Case \eqref{prop:item:zero} 
    of Proposition~\ref{prop:factorization},
    then the result is true.
    Suppose therefore that $u^2-4q\neq 0 \pmod{\ell}$, 
    that is, 
    we are in Cases \eqref{prop:item:non-square} or
    \eqref{prop:item:square} 
    of Proposition~\ref{prop:factorization},
    and let $\mathcal{T}\subseteq\GL_2(\mathbb{F}_\ell)$ 
    be a maximal torus containing~$\pi$.
    In other words, we take
    \(
        \mathcal{T}
        =
        \{
            \operatorname{diag}(\alpha,\beta):
            \alpha,\beta \in \FF_\ell^\times
        \}
    \)
    split in Case~\eqref{prop:item:square}, 
    and $\mathcal{T}$ non-split
    (i.e., isomorphic to $\FF_{\ell^2}^\times$) 
    in Case~\eqref{prop:item:non-square}. 
    The image $\overline{\mathcal{T}}$ 
    of $\mathcal{T}$ in $\PGL_2(\FF_\ell)$ 
    is cyclic of order $\ell+1$ in Case~\eqref{prop:item:non-square} 
    and $\ell-1$ in Case~\eqref{prop:item:square}. 
    The determinant induces an isomorphism 
    \(
        \det
        \colon
        \overline{\mathcal{T}}/\overline{\mathcal{T}}^2
        \to
        \FF_{\ell}^\times/(\FF_\ell^\times)^2
    \).
    The action of $\pi$ is via $\det(\pi)=q$,
    and we obtain an isomorphism
    \(
        \det
        \colon
        \overline{\mathcal{T}}
        /
        \subgrp{\overline{\mathcal{T}}^2,\pi}
        \to
        \FF_{\ell}^\times/\subgrp{(\FF_{\ell}^\times)^2,q}
    \).
    This shows that the index
    $[\overline{\mathcal{T}}:\pi]$ is odd 
    if and only if 
    $q$ is not a square mod~$\ell$.
    Since the number $r$ of
    irreducible factors of $G_{\mu}(J_1,J_2,J_3,x)$ over $\mathbb{F}_q$ 
    is equal to $r=(l+1)/e$ 
    or $r=2+(l-1)/e=[\overline{\mathcal{T}}:\pi]$,
    the proposition follows.
\end{proof}

\section{
    The case \texorpdfstring{\(F = \QQ(\sqrt{5})\)}{F = Q(sqrt{5})}: Gundlach--M\"{u}ller invariants
}
\label{sec:sqrt5}

All of the theory above can be made much more explicit 
in the case where \(F = \QQ(\sqrt{5})\),
where the invariants \(J_1\), \(J_2\), and \(J_3\)
are known as Gundlach--M\"{u}ller invariants~\cite{Gundlach1963,Muller85}.
Our computational results are based on this case,
so we will work out the details here, 
following the treatment in~\cite{LauYan11}.

Fixing a square root of \(5\) in \(\CC\),
we set
\(\epsilon := (1+\sqrt{5})/2\)
and 
\(\bar\epsilon := (1-\sqrt{5})/2\);
each is the image of the fundamental unit of
\(\mathcal{O}_{\QQ(\sqrt{5})}\) 
under one of its two embeddings into~\(\CC\).
Let
\[
    q_1 
    := 
    e\left(
        \frac{\epsilon z_1 - \overline{\epsilon} z_2}{\sqrt{5}}
    \right)
    \quad
    \text{and}
    \quad 
    q_2
    :=
    e\left(\frac{z_2-z_1}{\sqrt{5}}\right)
    \quad
    \text{for}
    \quad
    z=(z_1,z_2)\in\mathbb{H}^2
    \ .
\]
The Eisenstein series of even weight $k\geq2$ are defined by
\[
    g_k(z)
    =
    1
    +
    \sum_{t=a+b\bar\epsilon \in \mathcal{O}^{+}_{F}}
    b_k(t)q_{1}^aq_{2}^b
    \ ,
\]
where
the coefficients \(b_k(t)\) are defined by
\[
    b_k(t)
    =
    \kappa_k\sum_{(\mu)\supseteq(t)}\text{N}(\mu)^{k-1}
    \quad
    \text{with}
    \quad
    \kappa_k
    =
    \frac{(2\pi)^{2k}\sqrt{5}}{(k-1)!^25^k\zeta_F(k)}
    \in
    \QQ
\]
(here \(\mathrm{N}(\mu)\) is the norm \(\#\mathcal{O}_F/(\mu)\)).
The Hilbert modular forms
\(s_6\), \(s_{10}\), \(s_{12}\), and \(s_{15}\)
of respective weight \(6\), \(10\), \(12\), and \(15\)
for $\mathcal{H}_{\mathbb{Q}(\sqrt{5})}$ 
are defined by
\begin{align*}
    s_6 
    & :=
    -\frac{67}{2^5\cdot3^3\cdot5^2}(g_6-g_{2}^3)
    \ ,
    \\
    s_{10}
    & :=
    \frac{1}{2^{10}\cdot3^{5}\cdot5^{5}\cdot7}
    \left(
        191\cdot2161g_{10}-5\cdot67\cdot2293g_{2}^2g_6+2^2\cdot3\cdot7\cdot4231g_{2}^5
    \right)
    \ ,
    \\
    s_{12}
    & :=
    \frac{1}{2^2}\left( s_{6}^2-g_2s_{10} \right)
    \ ,
    \\
    s_{5}^{2} & := s_{10}
    \ ,
    \\
    s_{15}^2
    & :=
    5^5s_{10}^3
    -
    \frac{5^3g_{2}^2s_6s_{10}^2}{2}
    +
    \frac{g_{2}^5s_{10}^2}{2^{4}}
    +
    \frac{3^2\cdot5^2g_2s_{6}^3s_{10}}{2}
    -
    \frac{g_{2}^4s_{6}^2s_{10}}{2^{3}}
    -
    2\cdot3^3s_{6}^5
    +
    \frac{g_{2}^{3}s_{6}^4}{2^{4}}
    \ .
\end{align*}
Finally, the Gundlach--M\"uller invariants for \(\QQ(\sqrt{5})\) 
are 
\[
    J_1 := s_6/g_{2}^3
    \ ,
    \quad
    J_2 := g_{2}^5/s_{5}^2
    \ ,
    \quad 
    \text{and}
    \quad
    J_3 := s_{5}^3/s_{15}
    \ .
\]
The Hilbert modular polynomials for \(\QQ(\sqrt{5})\)
are too large to reproduce here,
but they can be downloaded from
\url{pub.math.leidenuniv.nl/~martindalecr}.\footnote{
    The polynomials $H_{\mu,3}$ do not appear there, 
    but only \(G_\mu\) is required to apply our results
    in~\S\ref{sec:Atkin-g2}.
}

\begin{proposition}[\protect{\cite[Prop.~4.5]{LauYan11} with correction
    to $\phi^{*}(j_{1})$}]
    \label{pullbacks} 
    For $F = \QQ(\sqrt{5})$, the Igusa invariants pull back to
	\begin{align*}
        \phi^*(j_1)
        & =
        4J_2(3J_{1}^2J_2-2)^5
        \ ,
        \\
        \phi^*(j_2)
        & =
        \frac{1}{2}J_2(3J_{1}^2J_2-2)^3
        \ ,
        \\
        \phi^*(j_3)
        & =
        2^{-3}J_2(2J_{1}^2J_2-2)^2(4J_{1}^2J_2+2^5\cdot3^2J_1-3)
        \ .
	\end{align*}
\end{proposition}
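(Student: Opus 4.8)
The plan is to reduce everything to the Fourier-expansion machinery already in place for Siegel and Hilbert modular forms. Recall from \S\ref{sec:curve-invariants} that the Igusa invariants $j_1,j_2,j_3$ are the displayed rational functions of the Siegel modular forms $\psi_4,\psi_6,\chi_{10},\chi_{12}$. Since the pullback $\phi^*$ is a ring homomorphism, it suffices to compute the four pullbacks $\phi^*(\psi_4),\phi^*(\psi_6),\phi^*(\chi_{10}),\phi^*(\chi_{12})$ as elements of the graded ring of Hilbert modular forms for $\QQ(\sqrt5)$, substitute them into the formulas for the $j_i$, and then rewrite the result in terms of $J_1=s_6/g_2^3$, $J_2=g_2^5/s_5^2$, and $J_3=s_5^3/s_{15}$.

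First I would compute the four pullbacks. The embedding $\phi$ is induced by the natural map $\mathbb{H}^2\to\mathbb{H}_2$ of \S\ref{sec:pullback}, under which a Siegel form of weight $k$ restricts to a Hilbert modular form of (parallel) weight $k$ for $\mathcal{H}_{\QQ(\sqrt5)}$. Because the graded ring of Hilbert modular forms for $\QQ(\sqrt5)$ is finitely generated, with a basis of monomials in $g_2,s_6,s_{10},s_{15}$ in each weight, each pullback is a specific linear combination of finitely many such monomials, whose coefficients I would pin down by matching finitely many Fourier coefficients in the $q_1,q_2$-expansions of both sides (equivalently, by invoking the classical restriction formulas of Gundlach \cite{Gundlach1963}, following \cite{LauYan11}). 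One finds in particular that $\phi^*(\psi_4)$ is a multiple of $g_2^2$, that $\phi^*(\chi_{10})$ is a multiple of $s_{10}=s_5^2$, and that $\phi^*(\chi_{12})$ is a multiple of $3s_6^2-2g_2s_{10}$.

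Next I would substitute these expressions into $j_1,j_2,j_3$ and absorb the normalizing constants. Using $s_6=J_1g_2^3$ and $s_{10}=g_2^5/J_2$, each weight-homogeneous combination collapses to a rational function of $J_1$ and $J_2$ alone, multiplied by the overall factor $J_2$ recording the weight defect. This is precisely where the factor $(3J_1^2J_2-2)$ emerges, since $3s_6^2-2g_2s_{10}=g_2^6(3J_1^2-2/J_2)=g_2^6(3J_1^2J_2-2)/J_2$; the same manipulation makes transparent why $J_3$ does not appear, as the $j_i$ are invariant under the sign change of $s_{15}$ and hence descend to functions of $J_1$ and $J_2$. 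The formula for $j_3$ takes slightly more care because it is a sum, but its two terms simplify by the same rules into the displayed product.

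The main obstacle will be bookkeeping the normalization constants correctly. The Siegel forms $\psi_4,\psi_6,\chi_{10},\chi_{12}$ and the Hilbert forms $g_2,s_6,s_{10},s_{12},s_{15}$ each carry delicate rational normalizations, and the precise scalars relating the pullbacks to $g_2^2$, $s_{10}$, and $3s_6^2-2g_2s_{10}$ must be tracked through the division to land on the exact constants $4$, $\tfrac12$, and $2^{-3}$ in the statement. This is exactly the place where the error in \cite[Prop.~4.5]{LauYan11} entered: correcting it amounts to recomputing the leading Fourier coefficient of $\phi^*(j_1)$ directly rather than transcribing the published constant, and this is the one genuinely delicate point. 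The remaining simplifications are routine polynomial algebra.
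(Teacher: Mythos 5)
The first thing to note is that the paper does not actually prove this proposition: it is imported wholesale from \cite[Prop.~4.5]{LauYan11}, with one constant corrected. Your proposal is therefore a from-scratch derivation rather than a comparison with an argument in the text, and its strategy is exactly the one the cited source uses: pull back the ring generators $\psi_4,\psi_6,\chi_{10},\chi_{12}$ to the graded ring of Hilbert modular forms for $\QQ(\sqrt{5})$ (Gundlach's restriction formulas), substitute into the Igusa formulas of \S\ref{sec:curve-invariants}, and rewrite in terms of $J_1=s_6/g_2^3$ and $J_2=g_2^5/s_{10}$. Your structural observations are sound: the identity $3J_1^2J_2-2=(3s_6^2-2g_2s_{10})/(g_2s_{10})$ does explain the recurring factor, and the absence of $J_3$ is correctly accounted for.

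The gap is that the proof is never actually executed at the only point where execution matters. The entire content of the statement is a set of explicit rational constants --- indeed the proposition exists precisely because one such constant was wrong in print --- yet your write-up asserts the three pullback identities with ``one finds that'' and defers the normalization bookkeeping to future Fourier-coefficient matching. Until those coefficients are matched, nothing here certifies the constants $4$, $\tfrac12$, $2^{-3}$, or the linear factor in $\phi^*(j_3)$. Two specific points need attention. First, $\phi^*(\chi_{10})\propto s_{10}$ is not automatic: the space of weight-$10$ cusp forms here is spanned by $g_2^2s_6$ and $s_{10}$, so cuspidality alone does not kill the $g_2^2s_6$ component; at least one further coefficient comparison is required. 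Second, your claim that the two terms of $j_3$ ``simplify by the same rules into the displayed product'' conflicts with the displayed product: both terms of $j_3$ share the factor $\chi_{12}^2\chi_{10}^{-3}$, whose pullback produces $(3J_1^2J_2-2)^2$ --- the same quadratic as in $\phi^*(j_1)$ and $\phi^*(j_2)$ --- whereas the statement shows $(2J_1^2J_2-2)^2$. Either the statement contains a typo that your derivation would expose, or your ``same rules'' step is hiding a cancellation you have not examined; a complete proof must resolve this discrepancy explicitly rather than wave at it.
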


For our computations,
we want to write $J_{1}$, $J_{2}$ and $J_{3}$ 
in terms of the Siegel modular forms $\psi_{4}$, $\psi_{6}$, $\chi_{10}$ and $\chi_{12}$.
(For a canonical way of writing 
$J_1$, $J_2$ and $J_3$ in terms of Igusa--Clebsch invariants,
we refer to \cite[Example 2.5.4]{Mar16}.)

\begin{proposition}[\protect{\cite[Example 2.5.4]{Mar16}}]
    \label{invformulae}
    For $F = \QQ(\sqrt{5})$, we have
    \begin{align*}
        J_{2} &= \phi^{*} \big( (\psi_{4}\psi_{6}/\chi_{10}-3^{5}2^{12})(-2-2(\psi_{6}^{2}-2^{12}3^{6}\chi_{12})/\psi_{4}^{3})^{-1} \big)
        \ ,
        \\
        J_{1} &= 3^{2}2^{5}J_{2}^{-1} + \phi^{*} \big( 2^{-6}3^{-3}(1-(\psi_{6}^2-2^{12}3^{6}\chi_{12})/\psi_{4}^{3}) \big)
        \ ,
        \\
        J_{3}^{2} &= 5^{5} - 2^{-1}5^{3}J_{1}J_{2} + 2^{-4}J_{2} + 2^{-1}3^{2}5^{2}J_{2}^{2}J_{1}^{3} - 2^{-3}J_{1}^{2}J_{2}^{2} - 2\cdot 3^{3}J_{2}^{3}J_{1}^{5} + 2^{-4}J_{2}^{3}J_{1}^{4}
        \ .
    \end{align*}
    The choice of square root for $J_{3}$ 
    corresponds to the choice of embedding $\iota$.
\end{proposition}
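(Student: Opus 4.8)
The plan is to exploit the fact that the embedding $\phi\colon\mathcal{H}_F\hookrightarrow\AV_2$ of~\S\ref{sec:pullback} is induced by the natural map $\mathbb{H}^2\to\mathbb{H}_2$, so that pulling back a Siegel modular form of weight~$k$ along~$\phi$ yields a Hilbert modular form of parallel weight~$(k,k)$ for $\QQ(\sqrt5)$. Moreover, since Siegel forms are invariant under all of $\Sp_2(\ZZ)$ (in particular under the element realizing the Galois action $z\mapsto\bar z$ on $\mathbb{H}^2$), each of $\phi^*\psi_4$, $\phi^*\psi_6$, $\phi^*\chi_{10}$, $\phi^*\chi_{12}$ is a \emph{symmetric} Hilbert modular form. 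The strategy is therefore to express these four pullbacks in terms of the generators $g_2,s_6,s_{10},s_{15}$ of the ring of symmetric Hilbert modular forms, and then to solve for $J_1=s_6/g_2^3$ and $J_2=g_2^5/s_{10}$.

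First I would invoke Gundlach's description of the graded ring of symmetric Hilbert modular forms for $\QQ(\sqrt5)$, namely $\CC[g_2,s_6,s_{10}]\oplus s_{15}\CC[g_2,s_6,s_{10}]$ with $s_{15}^2$ the polynomial recorded above (and $s_{12}=\tfrac14(s_6^2-g_2s_{10})$). Counting monomials of each weight in the algebraically independent $g_2,s_6,s_{10}$ shows that the graded pieces in weights $4,6,10,12$ have dimensions $1,2,3,4$, with bases $g_2^2$; $g_2^3,s_6$; $g_2^5,g_2^2s_6,s_{10}$; and $g_2^6,g_2^3s_6,s_6^2,g_2s_{10}$ respectively. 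Since the four pullbacks are symmetric forms of weights $4,6,10,12$, I can write
\[
    \phi^*\psi_4=\alpha g_2^2,\quad
    \phi^*\psi_6=\beta g_2^3+\gamma s_6,
\]
with $\phi^*\chi_{10}$ and $\phi^*\chi_{12}$ expressed analogously in the weight-$10$ and weight-$12$ bases, for constants in $\QQ$ yet to be pinned down.

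Next I would determine these constants by comparing finitely many Fourier coefficients: the $q$-expansions of $g_2,s_6,s_{10},s_{12}$ follow from the Eisenstein series $g_k$ recorded above, while the $q$-expansions of $\psi_4,\psi_6,\chi_{10},\chi_{12}$ restricted to $\mathbb{H}^2$ are classical and computable, so matching a bounded number of coefficients fixes every constant uniquely. Having done so, I would form the weight-zero combinations appearing in the statement, namely $\phi^*(\psi_6^2/\psi_4^3)$, $\phi^*(\chi_{12}/\psi_4^3)$, and $\phi^*(\psi_4\psi_6/\chi_{10})$; dividing out the appropriate power of $g_2$ and using $s_6/g_2^3=J_1$, $s_{10}/g_2^5=J_2^{-1}$ rewrites each as an explicit rational function of $J_1$ and $J_2$ alone. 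Solving the resulting relations for $J_1$ and $J_2$ then yields the stated formulae; the coefficient identities forced by this inversion (for instance, the vanishing of the $J_1^2$-term in $\phi^*((\psi_6^2-2^{12}3^6\chi_{12})/\psi_4^3)$) serve as a useful internal consistency check.

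Finally, the formula for $J_3^2$ requires no input from the Siegel side at all: since $J_3^2=s_{10}^3/s_{15}^2$, I would simply divide the given polynomial expression for $s_{15}^2$ (a sum of weight-$30$ monomials $g_2^as_6^bs_{10}^c$ with $a+3b+5c=15$) by $s_{10}^3$ and substitute $s_6=J_1g_2^3$, $s_{10}=g_2^5/J_2$; every monomial collapses to $J_1^bJ_2^{3-c}$, since the residual power of $g_2$ is $a+3b+5c-15=0$, reproducing the displayed polynomial in $J_1,J_2$ term by term. The sign ambiguity in $s_5$ and $s_{15}$ (hence in $J_3$) is exactly the ambiguity in the choice of square root of~$\Delta$, that is, of the embedding $\iota$, which accounts for the final sentence of the statement. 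I expect the main obstacle to be purely computational: carrying out the Fourier-coefficient comparison accurately enough to fix all the constants, and then performing the algebraic inversion to isolate $J_1$ and $J_2$ in precisely the stated form.
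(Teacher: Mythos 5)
The paper offers no proof of this proposition: it is quoted directly from \cite[Example 2.5.4]{Mar16}, so there is no internal argument to compare against. Your outline is nonetheless the standard derivation, and essentially the one carried out in \cite{Mar16} and \cite{LauYan11}: restrict the Siegel generators along $\phi$, note that the restrictions are symmetric Hilbert modular forms of parallel weights $4,6,10,12$, expand them in Gundlach's monomial bases (your dimension counts $1,2,3,4$ and the listed bases are correct), fix the constants by comparing finitely many Fourier coefficients, and then invert to isolate $J_1$ and $J_2$. Your observation that the $J_1^2$ term must cancel in $\phi^*\bigl((\psi_6^2-2^{12}3^6\chi_{12})/\psi_4^3\bigr)$ for the stated $J_1$ formula to close is exactly the right internal consistency check, and the $J_3^2$ computation is a genuine verification rather than a sketch.

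Two caveats. First, the entire content of the proposition is the specific rational constants; your proposal defers all of them to an unperformed Fourier-coefficient comparison, so as written it establishes only the shape of the $J_1$ and $J_2$ formulae, not the formulae themselves. Second, your $J_3^2$ argument contains a reciprocal slip: from the definition $J_3 = s_5^3/s_{15}$ in \S\ref{sec:sqrt5} one gets $J_3^2 = s_{10}^3/s_{15}^2$, which is what you assert, but the quantity you actually compute---and which does reproduce the displayed polynomial term by term (e.g.\ $5^5 s_{10}^3/s_{10}^3 = 5^5$ and $-\tfrac{5^3}{2}g_2^2 s_6 s_{10}^2/s_{10}^3 = -2^{-1}5^3 J_1 J_2$ under $s_6 = J_1 g_2^3$, $s_{10}=g_2^5/J_2$)---is $s_{15}^2/s_{10}^3$, i.e.\ the reciprocal. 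So either the definition of $J_3$ and the formula in the proposition use opposite conventions (a discrepancy you have in effect detected), or your identification of $J_3^2$ is inverted; you should resolve which before claiming the last formula is verified.
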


Proposition~\ref{invformulae}
can be used to find RM invariants for curves drawn from families with
known real multiplication,
before factoring specialized Hilbert modular polynomials in those RM invariants
to derive information on Frobenius.
However, it also crystallizes the rationality question
alluded to at the end of~\S\ref{sec:pullback}:
as we see, a set of values of the Hilbert modular forms over \(\FF_q\)
(or, equivalently, a tuple of Igusa or Igusa--Clebsch invariants over
\(\FF_q\))
only determine \(J_1\), \(J_2\), and \(J_3^2\) over \(\FF_q\).

To get \(J_3\), we need to choose a square root of \(J_3^2\);
but \(J_3^2\) is not guaranteed to be a square in \(\FF_q\).
If \(J_3^2\) is not a square in \(\FF_q\),
then we cannot apply Propositions~\ref{prop:61}
or~\ref{prop:factorization}---%
not even if \(J_3\) does not appear unsquared 
in the specialized polynomial~\(G_\mu\).

\section{
    Experimental results
}
\label{sec:experiments}

In order to validate the factorization patterns of Proposition~\ref{prop:factorization},
we ran a series of experiments for \(F = \QQ(\sqrt{5})\), 
using the family of curves~\cite{TauTopVer91}
\[
    \C_{a}: y^2 = x^5 - 5x^3 + 5x + a
\]
whose Jacobians all have real multiplication by \(\mathcal{O}_{\QQ(\sqrt{5})}\).
This family was used in the point-counting records
of~\cite{Gaudry--Kohel--Smith}.
The Igusa--Clebsch invariants of \(\C_{a}\) are
\[
    (A,B,C,D) 
    =
    \left(
        2^5\cdot5^2\cdot7
        ,
        \;
        2^{10}\cdot5^4
        ,
        \;
        {-2^{13}}\cdot5^5\cdot(9a^2-236)
        ,
        \;
        2^{20}\cdot5^5\cdot(a^2-4)^2
    \right)
    \ .
\]

Our experiments treated
\begin{enumerate}
    \item
        the ramified prime \(\ell = 5\), with 
        \(\mu = (5+\sqrt{5})/2\),
        and the 
        modular polynomial \(G_\mu\)
        from
        {\url{pub.math.leidenuniv.nl/~martindalecr}};
    \item
        the split prime \(\ell = 11\), with $\mu = (7+\sqrt{5})/2$,
        and the 
        modular polynomial \(G_\mu\)
        from
        {\url{pub.math.leidenuniv.nl/~martindalecr}}.
\end{enumerate}

We collected statistics on the factorization patterns for 10000
tests. For each test, we chose a random prime $q$ of ten decimal
digits, and we chose $a$ randomly from \(\FF_q\)
subject to the requirement that \(\C_{a}\) be nonsingular,
which is \(a^2 \not= 4\).
We then applied the formul\ae{} of
Eq.~\eqref{eq:psi-from-IgusaClebsch} and Proposition~\ref{invformulae} 
to obtain the RM invariants \(J_2\) and \(J_1\) for the Jacobian of \(\C_{a}\),
as well as the squared invariant \(J_3^2\).

In half the cases on average, $J_3^2$ had a square root in $\F_q$;
in these cases we could obtain $J_3$,
and proceed to factor \(G_\mu(J_1,J_2,J_3,x)\).
The average frequencies of the resulting factorization patterns 
appear in Tables~\ref{tab:facto-patterns-5}
and~\ref{tab:facto-patterns-11}
(here we take the averages over the roughly 5000 tests
where $J_3^2$ has a root in $\F_q$; for the two roots $J_3$ and $-J_3$
in $\F_q$, we always obtained the same factorization pattern). 

\begin{table}[htp]
    \centering
    \begin{tabular}{|c|r|r|}
        \hline
        Factorization pattern, type of \(\mu\) & Found & Percentage \\  
        \hline
        \(\mathcal{O}_{\QQ(\sqrt{5})}\)-Elkies: $(1,1,e,\ldots,e)$ with \(e > 1\) & total 1835 & total 36.8\%\\
        \hline
        $(1,1,4)$       & 1266 & 25.4\% \\
        $(1,1,2,2)$     &  569 & 11.4\% \\
        \hline
        \(\mathcal{O}_{\QQ(\sqrt{5})}\)-Atkin: $(e,\ldots,e)$ with \(e > 1\) & total 2049 & total 41.1\% \\
        \hline
        $(6)$           &  844 & 16.9\% \\
        $(3,3)$         &  794 & 15.9\% \\
        $(2,2,2)$       &  411 &  8.2\% \\
        \hline
        \(\mathcal{O}_{\QQ(\sqrt{5})}\)-Volcanic: $(1,e)$ or $(1,\ldots,1)$ & total 1105 & total 22.1\%\\
        \hline
        $(1,5)$         & 1058 & 21.2\% \\
        $(1,1,1,1,1,1)$ &   47 &  0.9\% \\
        \hline
    \end{tabular}
    \caption{%
        Factorization pattern frequencies for the modular
        polynomial $G_\mu(J_1,J_2,J_3,x)$ 
        for $\mu=(5+\sqrt{5})/2$ of norm \(\ell=5\). 
        The degree of $G_\mu(J_1,J_2,J_3,x)$ in~\(x\) is~6.
        We only factored when $J_3^2$ was a square in $\F_q$,
        which happened in 4989 of the 10000 trials (49.9\%). 
    }
  \label{tab:facto-patterns-5}
\end{table}

\begin{table}[htp]
    \centering
    \begin{tabular}{|c|r|r|}
        \hline
        Factorization pattern, type of \(\mu\) & Found & Percentage \\  
        \hline
        \(\mathcal{O}_{\QQ(\sqrt{5})}\)-Elkies: 
        $(1,1,e,\ldots, e)$ with \(e > 1\) 
        & total 2262 & total 44.7\% 
        \\
        \hline
        $(1,1,5,5)$                 & 1040 & 20.6\% 
        \\
        $(1,1,10)$                  &  994 & 19.7\% 
        \\
        $(1,1,2,2,2,2,2)$           &  228 &  4.5\% 
        \\
        \hline 
        \(\mathcal{O}_{\QQ(\sqrt{5})}\)-Atkin: 
        $(e,\ldots,e)$ with $e>1$ 
        & total 2329 & total 46.1\%
        \\
        \hline
        $(12)$                      &  859 & 17.0\% 
        \\
        $(6,6)$                     &  404 &  8.0\% 
        \\
        $(4,4,4)$                   &  424 &  8.4\% 
        \\
        $(3,3,3,3)$                 &  429 &  8.5\% 
        \\
        $(2,2,2,2,2,2)$             &  213 &  4.2\% 
        \\
        \hline 
        \(\mathcal{O}_{\QQ(\sqrt{5})}\)-volcanic: 
        $(1,e)$ or $(1,\ldots,1)$ & total 466 & total 9.2\% 
        \\
        \hline
        $(1,11)$                    &  461 &  9.1\% 
        \\
        $(1,1,1,1,1,1,1,1,1,1,1,1)$ &    5 &  0.1\% 
        \\
        \hline
    \end{tabular}
    \caption{Factorization pattern frequencies for the modular
    polynomial $G_\mu(J_1,J_2,J_3,x)$
    for $\mu=(7+\sqrt{5})/2$ of norm~\(\ell=11\). 
    The degree of $G_\mu(J_1,J_2,J_3,x)$ in~\(x\) is~12.
    We only factored when $J_3$ was a square in \(\FF_q\),
    which happened in 5057 of the 10000 trials (50.6\%). }
    \label{tab:facto-patterns-11}
\end{table}

According to Proposition \ref{prop:factorization},
we would expect that $1/\ell$ of the time 
\(\mu\) should be \(\mathcal{O}_{\QQ(\sqrt{5})}\)-volcanic,
$(\ell-1)/2\ell$ of the time 
\(\mu\) should be \(\mathcal{O}_{\QQ(\sqrt{5})}\)-Elkies,
and $(\ell-1)/2\ell$ of the time 
\(\mu\) should be \(\mathcal{O}_{\QQ(\sqrt{5})}\)-Atkin.
The summary of our above results in Table~\ref{tab:totals} appears to confirm this.
This gives us considerable confidence that the Hilbert modular polynomials computed in \cite[Chapter 2]{Mar16} are correct.

Finally, we ran the same tests on Milio's modular
polynomial\footnote{%
    Available from \url{https://members.loria.fr/EMilio/modular-polynomials/}
}
$\Phi(\mathfrak{J_1}, \mathfrak{J_2}, X)$ for $\ell=5$ and
$\mu=(5+\sqrt{5})/2$, where $\mathfrak{J_1}=J_2$ and
$\mathfrak{J_2}=J_1J_2$. 
We obtained exactly the same factorization patterns each time $J_3$
was in $\F_q$.

\begin{table}[htp]
    \centering
    \begin{tabular}{|r@{\quad}r|c|c|c|}
        \hline
        & & \multicolumn{3}{c|}{Prime type frequencies for \(\mu\)} 
        \\
        \cline{3-5}
        &
        & \(\mathcal{O}_{\QQ(\sqrt{5})}\)-volcanic
        & \(\mathcal{O}_{\QQ(\sqrt{5})}\)-Elkies 
        & \(\mathcal{O}_{\QQ(\sqrt{5})}\)-Atkin
        \\
        \hline
        \multirow{2}{*}{\(\mu = \frac{5 - \sqrt{5}}{2}\)} 
        & Theory & $20.0\%$ & $40.0\%$ & $40.0\%$
        \\
        & Experiments & $22.1\%$ & $36.8\%$& $41.1\%$
        \\
        \hline
        \multirow{2}{*}{\(\mu = \frac{7 + \sqrt{5}}{2}\)}
        & Theory & $9.1\%$ & $45.5\%$ & $45.5\%$
        \\
        & Experiments & $9.2\%$ &$44.7\%$ &$46.1\%$
        \\
        \hline
    \end{tabular}
    \caption{%
        Experimental evidence supporting the correctness of
        Martindale's Hilbert modular polynomials.
    }
    \label{tab:totals}
\end{table}





\begin{acknowledgement}
    This article reports on work carried out at the workshop 
    \emph{Algebraic Geometry for Coding Theory and Cryptography}
    at the Institute for Pure and Applied Mathematics (IPAM),
    University of California, Los Angeles,
    February 22--26, 2016. 
    The authors thank IPAM for its generous support.
    Chloe Martindale was supported by an ALGANT-doc scholarship in
    association with Universiteit Leiden and Universit\'e de Bordeaux.
    Maike Massierer was supported by the Australian Research Council (DP150101689).
\end{acknowledgement}

\FloatBarrier

\bibliographystyle{abbrv}
\bibliography{biblio}

\begin{thebibliography}{10}

\bibitem{AVIsogenies}
G.~Bisson, R.~Cosset, and D.~Robert.
\newblock {AVIsogenies}: a library for computing isogenies between abelian
  varieties.
\newblock \url{http://avisogenies.gforge.inria.fr}.

\bibitem{Cantor87}
D.~G. Cantor.
\newblock Computing in the {Jacobian} of a hyperelliptic curve.
\newblock {\em Mathematics of Computation}, 48(177):95--101, 1987.

\bibitem{Carls04}
R.~Carls.
\newblock {\em A generalized arithmetic geometric mean}.
\newblock PhD thesis, University of Groningen, the Netherlands, 2004.

\bibitem{Couveignes--Ezome}
J.-M. Couveignes and T.~Ezome.
\newblock Computing functions on {J}acobians and their quotients.
\newblock {\em LMS Journal of Computation and Mathematics}, 18(1):555--577, 001
  2015.

\bibitem{Flynn90}
E.~V. Flynn.
\newblock The {Jacobian} and formal group of a curve of genus 2 over an
  arbitrary ground field.
\newblock {\em Mathematical Proceedings of the Cambridge Philosophical
  Society}, 107:425--441, 1990.

\bibitem{GauHar00}
P.~Gaudry and R.~Harley.
\newblock Counting points on hyperelliptic curves over finite fields.
\newblock In W.~Bosma, editor, {\em Algorithmic Number Theory, 4th
  International Symposium, ANTS-IV, Leiden}, volume 1838 of {\em Lecture Notes
  in Comput. Sci.}, pages 313--332. Springer, 2000.

\bibitem{Gaudry--Kohel--Smith}
P.~Gaudry, D.~Kohel, and B.~Smith.
\newblock Counting points on genus 2 curves with real multiplication.
\newblock In D.~H. Lee and X.~Wang, editors, {\em Advances in Cryptology --
  ASIACRYPT 2011}, volume 7073 of {\em Lecture Notes in Comput. Sci.}, pages
  504--519, Seoul, South Korea, Dec 2011. Springer.

\bibitem{GauSch12}
P.~Gaudry and E.~Schost.
\newblock Genus 2 point counting over prime fields.
\newblock {\em Journal of Symbolic Computation}, 47(4):368--400, 2012.

\bibitem{GauSch08}
P.~Gaudry and E.~Schost.
\newblock Hyperelliptic point counting record: 254 bit {Jacobian}, June 2008.
\newblock \url{http://webloria.loria.fr/~gaudry/record127/}.

\bibitem{Grant90}
D.~Grant.
\newblock Formal groups in genus two.
\newblock {\em Journal für die {Reine} und {Angewandte} {Mathematik}},
  411:96--121, 1990.

\bibitem{Gundlach1963}
K.-B. Gundlach.
\newblock Die {B}estimmung der {F}unktionen zur {H}ilbertschen {M}odulgruppe
  des {Z}ahlk{\"o}rpers $\mathbb{Q}(\sqrt{5})$.
\newblock {\em Mathematische Annalen}, 152(3):226--256, 1963.

\bibitem{Harrison12}
M.~C. Harrison.
\newblock An extension of {K}edlaya's algorithm for hyperelliptic curves.
\newblock {\em J. Symbolic Comput.}, 47(1):89--101, 2012.

\bibitem{Harvey07}
D.~Harvey.
\newblock Kedlaya's algorithm in larger characteristic.
\newblock {\em Int. Math. Res. Not. IMRN}, 2007(22):Art. ID rnm095, 29, 2007.

\bibitem{Howe--Zhu}
E.~W. Howe and H.~J. Zhu.
\newblock On the existence of absolutely simple abelian varieties of a given
  dimension over an arbitrary field.
\newblock {\em J. Number Theory}, 92(1):139--163, 2002.

\bibitem{Igusa62}
J.-I. Igusa.
\newblock On {Siegel} modular forms of genus two.
\newblock {\em Amer. J. Math.}, 84(1):175--200, 1962.

\bibitem{Igusa67}
J.-I. Igusa.
\newblock Modular forms and projective invariants.
\newblock {\em Amer. J. Math.}, 89(3):817--855, 1967.

\bibitem{JoLe01}
A.~Joux and R.~Lercier.
\newblock "{C}hinese {\&} match", an alternative to {A}tkin's "match and sort"
  method used in the {SEA} algorithm.
\newblock {\em Math. Comput.}, 70(234):827--836, 2001.

\bibitem{Kedlaya01}
K.~S. Kedlaya.
\newblock Counting points on hyperelliptic curves using {M}onsky--{W}ashnitzer
  cohomology.
\newblock {\em J. Ramanujan Math. Soc.}, 16(4):323--338, 2001.

\bibitem{Lan}
S.~Lang.
\newblock {\em Algebraic Number Theory}, volume~16 of {\em Graduate texts in
  mathematics}.
\newblock Springer-Verlag, 1970.

\bibitem{Lan2}
S.~Lang.
\newblock {\em Introduction to Algebraic and Abelian Functions}, volume~89 of
  {\em Graduate texts in mathematics}.
\newblock Springer-Verlag, 1972.

\bibitem{LauNaeYan15}
K.~Lauter, M.~Naehrig, and T.~Yang.
\newblock Hilbert theta series and invariants of genus 2 curves.
\newblock {\em Journal of Number Theory}, 161:146 -- 174, 2016.
\newblock Special Issue on Applications of Automorphic Forms in Number Theory
  and Combinatorics in honor of the lifelong work of Wen-Ching Winnie Li.

\bibitem{LauYan11}
K.~Lauter and T.~Yang.
\newblock Computing genus 2 curves from invariants on the {H}ilbert moduli
  space.
\newblock {\em Journal of Number Theory}, 131(5):936 -- 958, 2011.
\newblock Special Issue on Elliptic Curve Cryptography, Neil Koblitz and Victor
  S. Miller.

\bibitem{Lercier97}
R.~Lercier.
\newblock {\em Algorithmique des courbes elliptiques dans les corps finis.
  (Algorithms for elliptic curves over finite fields)}.
\newblock PhD thesis, {\'{E}}cole Polytechnique, Palaiseau, France, 1997.

\bibitem{Mar16}
C.~Martindale.
\newblock {PhD} thesis, Universiteit Leiden, 2017.
\newblock In preparation.

\bibitem{Mestre01}
J.-F. Mestre.
\newblock Lettre à {Gaudry} et {Harley}.
\newblock URL:
  \url{https://webusers.imj-prg.fr/~jean-francois.mestre/lettreGaudryHarley.ps},
  2001.

\bibitem{Mestre02}
J.-F. Mestre.
\newblock Algorithme pour compter des points de courbes en petite
  caractéristique et petit genre.
\newblock Notes from a talk given at the Rennes cryptography seminar. URL:
  \url{https://webusers.imj-prg.fr/~jean-francois.mestre/rennescrypto.ps},
  2002.

\bibitem{Milio-thesis}
E.~Milio.
\newblock {\em {Computing modular polynomials in dimension 2}}.
\newblock {PhD} thesis, {Universit{\'e} de Bordeaux}, Dec. 2015.

\bibitem{Milio15}
E.~Milio.
\newblock A quasi-linear time algorithm for computing modular polynomials in
  dimension 2.
\newblock {\em LMS J. Comput. Math.}, 18(1):603--632, 2015.

\bibitem{Milne}
J.~S. Milne.
\newblock Abelian varieties.
\newblock In G.~Cornell and J.~H. Silverman, editors, {\em Arithmetic
  Geometry}, pages 103--150. Springer-Verlag New York, 1986.

\bibitem{Muller85}
R.~M{\"u}ller.
\newblock Hilbertsche {M}odulformen und {M}odulfunktionen zu
  $\mathbb{Q}(\sqrt{5})$.
\newblock {\em Archiv der Mathematik}, 45(3):239--251, 1985.

\bibitem{Pila90}
J.~Pila.
\newblock Frobenius maps of abelian varieties and finding roots of unity in
  finite fields.
\newblock {\em Math. Comp.}, 55(192):745--763, 1990.

\bibitem{Ruck90}
H.-G. R{\"u}ck.
\newblock Abelian surfaces and {J}acobian varieties over finite fields.
\newblock {\em Compositio Math.}, 76(3):351--366, 1990.

\bibitem{Satoh2002}
T.~Satoh.
\newblock On p-adic point counting algorithms for elliptic curves over finite
  fields.
\newblock In C.~Fieker and D.~R. Kohel, editors, {\em Algorithmic Number
  Theory: 5th International Symposium, ANTS-V Sydney, Australia, July 7--12,
  2002 Proceedings}, pages 43--66, Berlin, Heidelberg, 2002. Springer Berlin
  Heidelberg.

\bibitem{Schoof85}
R.~Schoof.
\newblock Elliptic curves over finite fields and the computation of square
  roots mod $p$.
\newblock {\em Math. Comp.}, 44(170):483--494, 1985.

\bibitem{Schoof95}
R.~Schoof.
\newblock Counting points on elliptic curves over finite fields.
\newblock {\em J. Théor. Nombres Bordeaux}, 7(1):219--254, 1995.

\bibitem{Sutherland13}
A.~V. Sutherland.
\newblock On the evaluation of modular polynomials.
\newblock In {\em A{NTS} {X}---{P}roceedings of the {T}enth {A}lgorithmic
  {N}umber {T}heory {S}ymposium}, volume~1 of {\em Open Book Ser.}, pages
  531--555. Math. Sci. Publ., Berkeley, CA, 2013.

\bibitem{TauTopVer91}
W.~Tautz, J.~Top, and A.~Verberkmoes.
\newblock Explicit hyperelliptic curves with real multiplication and
  permutation polynomials.
\newblock {\em Canad. J. Math.}, 43(5):1055--1064, 1991.

\bibitem{vdG}
G.~van~der Geer.
\newblock {\em Hilbert modular surfaces}, volume~16 of {\em Ergebnisse der
  Mathematik und ihrer Grenzgebiete. 3. Folge / A Series of Modern Surveys in
  Mathematics}.
\newblock Springer, 1988.

\bibitem{Was}
L.~C. Washington.
\newblock {\em Elliptic Curves. Number Theory and Cryptography (Second
  Edition)}, volume~50 of {\em Discrete Mathematics and its Applications}.
\newblock CRC Press, 2008.

\end{thebibliography}

\end{document}